\newtheorem{theorem}{Theorem}[section]
\newtheorem{lemma}[theorem]{Lemma}
\newtheorem{proposition}[theorem]{Proposition}
\newtheorem{corollary}[theorem]{Corollary}
\theoremstyle{remark}
\newtheorem{remark}[theorem]{Remark}
\newtheorem{remarks}[theorem]{Remarks}
\newtheorem{example}[theorem]{Example}
\theoremstyle{definition}
\newtheorem{definition}[theorem]{Definition}
\newcommand{\oo}{\mathcal O}
\newcommand{\ii}{\mathcal I}
\newcommand{\pp}{\mathbb P}
\newcommand{\zz}{\mathbb Z}
\newcommand{\proj}{\operatorname{Proj}}
\newcommand{\Hom}{\operatorname{Hom}}
\newcommand{\rk}{\operatorname{rank}}
\newcommand{\coker}{\operatorname{coker}}
\newcommand{\depth}{\operatorname{depth}}
\newcommand{\ext}{\operatorname{Ext}}
\newcommand{\pd}{\operatorname{pd}}
\newcommand{\spec}{\operatorname{Spec}}
\newcommand{\Ass}{\operatorname{Ass}}
\title[Torsion--free sheaves and ACM schemes]{Torsion--free sheaves and ACM schemes}
\author{S. Greco}
\author{R. Notari}
\author{M.L. Spreafico}
\address{S. Greco, Dipartimento di Matematica, Politecnico
di Torino, I-10129 Torino, Italia} \email{silvio.greco@polito.it}
\address{Roberto Notari, Dipartimento di Matematica ``Francesco
 Brioschi", Politecnico di Milano, I-20133 Milano, Italia}
\email{roberto.notari@polimi.it}
\address{M.L. Spreafico, Dipartimento di Matematica, Politecnico
di Torino, I-10129 Torino, Italia}
\email{maria.spreafico@polito.it}
\date{\today}
\begin{document}

\begin{abstract} In this paper we study short exact sequences $ 0
\to \mathcal P \to \mathcal N \to \ii_D(k) \to 0 $ with $ \mathcal
P, \mathcal N $ torsion--free sheaves and $ D $ closed projective
scheme. This is a classical way to construct and study projective
schemes (e.g. see \cite{hart-1974}, \cite{hart-2}, \cite{mdp},
\cite{serre-1960}). In particular, we give homological conditions
on $ \mathcal P $ and $ \mathcal N $ that force $ D $ to be ACM,
without constrains on its codimension. As last result, we prove
that if $ \mathcal N $ is a higher syzygy sheaf of an ACM scheme $
X,$ the scheme $ D $ we get contains $ X.$
\end{abstract}

\subjclass[2000]{14M05, 14F05}

\keywords{Resolutions, Small projective dimension, Construction of
schemes, Syzygy sheaves}
\thanks{All the authors have been supported by the framework of Prin 2008 ``Geometria delle variet\`{a} algebriche e dei loro spazi di moduli", cofinanced by MIUR}

\maketitle

%%%%%%%%%%%%%%%%%%%%%%%%%%%%%%%%%%%%%%%%%%%%%%%%%%%%%%%%%%%%%%%%%%%%%%%%%%%%%

\section{Introduction}

Homological methods have proved to be very useful in studying
projective schemes. For example, many information on the geometry
of a closed scheme $ X \subseteq \pp^r $ are encoded in the
minimal free resolution of the saturated ideal $ I_X $ of $ X.$
Homological methods are used also to construct schemes with
prescribed properties. For example, in \cite{mdp}, M.
Martin--Deschamps and D. Perrin gave a homological construction of
the ideal of a curve $ C $ in $ \pp^3 $ with a prescribed
Hartshorne--Rao module and of minimal degree. In more details,
given a graded Artinian $ R:=K[x,y,z,w]$--module $ M $ with
minimal free resolution $$ 0 \to L_4 \to L_3 \to L_2 \to L_1 \to
L_0 \to M \to 0,$$ they show how to compute a free graded $
R$--module $ P $ such that the cokernel of a general injective map
$ \gamma: P \to N := \ker(L_1 \to L_0) $ is isomorphic to the
saturated ideal of a locally Cohen--Macaulay curve $ C \subset
\pp^3,$ up to a shift in grading, that is to say, they produce a
short exact sequence
\begin{equation} \label{PNI} 0 \to P \stackrel{\gamma}{\longrightarrow} N \to I_C(k) \to
0.\end{equation}

An analogous sequence was used first by J.P.Serre in
\cite{serre-1960} to construct subcanonical curves in $ \pp^3.$ To
this end, he considered a rank $ 2 $ vector bundle $ \mathcal N, $
a global section $ s $ whose zero--set has codimension $ 2,$ and
the corresponding map $ \oo \stackrel{s}{\to} \mathcal N. $ The
image of the dual map $ \mathcal N^\vee \to \oo $ is the ideal
sheaf of a subcanonical curve $ C \subset \pp^3.$ J.P. Serre's
construction was generalized to construct codimension $ 2 $
schemes in $ \pp^r $ (see \cite{hart-1974}, \cite{okonek}, among
others) and to sections, whose zero--set has codimension $ 2,$ of
reflexive rank $ 2 $ sheaves on $ \pp^3 $ by R. Hartshorne (see
\cite{hart-2}). In the new more general setting, the constructed
curves were generically locally complete intersection curves.

While studying the construction of minimal curves by M.
Martin--Deschamps and D. Perrin given in \cite{mdp}, we applied it
to syzygy modules of $ 0$--dimensional schemes of $ \pp^3 $
instead of syzygy modules of graded Artinian $ R$--modules. The
curves we produced were all arithmetically Cohen--Macaulay. To
understand why the curves share this unexpected property, we were
led to consider all the previous apparently different
constructions from the same point of view, getting as result a
quite general construction of arithmetically Cohen--Macaulay
schemes of arbitrary codimension. For particular choices, we
construct arithmetically Cohen--Macaulay schemes containing a
given scheme with the same property but of larger codimension.

We outline the structure of the paper. In section 2, first of all
we describe some properties of torsion--free coherent sheaves, and
their cohomology. Then, we get some bounds on the projective
dimensions of $ \mathcal N $ and $ \mathcal P $ in terms of the
codimension of $ D $ and of the cohomology of its ideal sheaf $
\mathcal I_D. $ Finally, we recall the well known result of
Martin--Deschamps and Perrin, described in \cite{mdp}, about
maximal subsheaves which allows us to assure that the cokernel of
a given injective map $ \mathcal P \to \mathcal N $ is an ideal
sheaf.

Section 3 is the heart of the paper. At first, we give some
conditions on the coherent torsion--free sheaves $ \mathcal N $
and $ \mathcal P $ to assure that the short exact sequence
(\ref{PNI}) ends with the ideal sheaf of a closed arithmetically
Cohen--Macaulay subscheme $ D $ of $ \pp^r $ of codimension $
2+\pd(\mathcal P),$ where $ \pd(\mathcal P) $ is the projective
dimension of $ \mathcal P.$ Moreover, we show that the
construction characterizes the couple $ (D, \mathcal P) $ in the
sense that starting from an arithmetically Cohen--Macaulay scheme
$ D $ and a torsion--free coherent sheaf $ \mathcal P,$ we can
construct a sheaf $ \mathcal N $ fulfilling our conditions.

In the codimension $ 2 $ case we give a geometrical description of
our construction associating to any non--zero element of $ H^0(D,
\omega_D(c)) $ an extension as (\ref{PNI}). This is a new reading
of the analogous result of  \cite{serre-1960}, for coherent
torsion--free sheaves, without bounds on the rank of $ \mathcal N.
$ We show also that some schemes we obtain in our setting cannot
be obtained with Hartshorne's construction, and conversely. So,
the two constructions are not the same one.

Section 4 is devoted to solve the problem of finding a codimension
$ s $ closed scheme $ D $ containing a given codimension $ t (>s)
$ scheme $ X,$ them both arithmetically Cohen--Macaulay. We end
the section with some examples.

%%%%%%%%%%%%%%%%%%%%%%%%%%%%%%%%%%%%%%%%%%%%%%%%%%%%%%%%%%%%%%%%%%%%%%%%%

\section{Preliminary results}

Let $ K $ be an algebraically closed field, and let $ R = K[x_0,
\dots, x_r] $ be the graded polynomial ring. Let $ \pp^r =
\proj(R) $ be the projective space of dimension $ r $ over $ K.$
If $ X \subseteq \pp^r $ is a closed scheme, we denote by $ \ii_X
$ its ideal sheaf in $ \oo_{\pp^r} $ and by $ I_X $ its saturated
ideal in $ R,$ and it holds $ I_X = H^0_*(\pp^r, \ii_X).$

By $R$--module we mean ``graded $ R$--module". By sheaf we mean
``coherent $ \oo_{\pp^r}$--module". If $ F $ is a $ R$--module we
denote by $ \mathcal F $ the corresponding sheaf, namely $
\mathcal F := \tilde F.$

We recall that a local ring $ A $ is Cohen--Macaulay if $ \dim(A)
= \depth(A).$ A ring $ A $ is Cohen--Macaulay if $
A_{\mathfrak{M}} $ is Cohen--Macaulay for every maximal ideal $
\mathfrak{M} \subset A.$ A scheme $ X $ is Cohen--Macaulay if the
ring $ \oo_{X,x} $ is Cohen--Macaulay for every closed point $ x
\in X.$  A closed scheme $ X \subseteq \pp^r$ is arithmetically
Cohen--Macaulay (ACM, for brief) if the coordinate ring $ R_X =
R/I_X $ is a Cohen--Macaulay ring. This is equivalent to say that
$H^i_*(\ii_X) = 0$ for $ 1 \le i \le \dim(X).$

For any finitely generated $ R$--module $ P $ we denote by $
\pd(P) $ the projective dimension of $ P,$ that is to say, the
length of the minimal free resolution of $ P $ (\cite{ei}, Theorem
19.1 and the previous Definition).

Let $ D \subseteq \pp^r $ be a closed scheme, and let $ I_D
\subseteq R $ be its saturated ideal. If $$ 0 \to F_t \to \dots
\to F_2 \to F_1 \to I_D \to 0 $$ is the minimal free resolution of
$ I_D,$ with $ t \leq r,$ and $ P $ is the kernel of $ F_1 \to
I_D,$ then we have a short exact sequence $$ 0 \to P \to F_1 \to
I_D \to 0 $$ which is equivalent to the minimal free resolution.
The $ R$--module $ P $ is a torsion--free finitely generated $
R$--module with projective dimension $ \pd(P) = \pd(I_D)-1.$ We
can also consider the short exact sequence $$ 0 \to \mathcal P \to
\mathcal F_1 \to \ii_D \to 0 $$ obtained by considering the
sheaves associated to the modules in the former sequence. Of
course, $ \mathcal P $ is a torsion--free sheaf, and $ \mathcal
F_1 $ is dissoci\'{e}, according to the following definitions.

\begin{definition} A $ R$--module $ M $ is torsion--free if every
non--zero element of $ R $ is a non zero--divisor of $M.$

A sheaf $ \mathcal F $ on $ \pp^r $ is torsion--free if $ \mathcal
F(U) $ is a torsion--free $ \oo_{\pp^r}(U)$--module for every open
subset $ U \subseteq \pp^r.$ This is equivalent to say that
$\mathcal F_x$ is torsion free over $\oo_{\pp^r ,x}$ for every $x
\in \pp^r$.
\end{definition}

\begin{definition} Let $ \mathcal F $ be a sheaf on $ \pp^r.$ We
say that $ \mathcal F $ is dissoci\'{e} of rank $ s $ if $$
\mathcal F = \oplus_{i=1}^s \oo_{\pp^r}(a_i) $$ for suitable
integers $ a_1, \dots, a_s.$
\end{definition}

Of course, if $ F $ is a free $ R$--module, then $ \mathcal F =
\tilde F $ is dissoci\'{e}. Conversely, if $ \mathcal F $ is
dissoci\'{e}, then $ H^0_*(\mathcal F) $ is a free $ R$--module.

Generalizations of the approach consist in relaxing the strong
hypothesis ``dissoci\'{e}" on $ \mathcal F_1.$ Hence, let us
consider the short exact sequence
\begin{equation} \label{pnd} 0 \to \mathcal P
\stackrel{\gamma}{\longrightarrow} \mathcal N \to \ii_D(k) \to 0
\end{equation} with $ \mathcal P $ torsion--free, and $ k \in
\zz.$ Standard arguments allow us to prove that $ \mathcal N $ is
torsion--free, as well. So, the weakest hypothesis on $ \mathcal N
$ is torsion--free. On the other hand, short exact sequences are
classified by $ \ext^1_R(\ii_D, \mathcal P).$

As we are interested in sequences of sheaves, it will help to have
the analogue for sheaves of the minimal free resolution and of
projective dimension of a graded finitely generated module.

By (\cite{hart}, Ch. II, Corollary 5.18), we have that any sheaf $
\mathcal P $ admits a {\em dissoci\'{e} resolution}, namely a
resolution by dissoci\'{e} sheaves. We need to be more precise on
this point, and so we begin with some preliminaries.

\begin{remark} \label{ass} \rm
We recall some facts about associated points. For more details see
e.g (\cite{matsumura}, Ch. 3), where the case of (ungraded)
modules is dealt with. Extending to sheaves is straightforward.

(i) Let $\mathcal F$ be a sheaf. A (not necessarily closed) point
$y \in \pp^r$ is associated to $\mathcal F$ if there is an open
affine $U = \spec(A) \subseteq \pp^r$ containing $y$ such that the
prime ideal of $A$ corresponding to $y$ is associated to the
$A$-module $\Gamma (U,\mathcal F)$; this is equivalent to say that
$\depth_{\oo_{\pp^r},_y} (\mathcal F_y) = 0$.

(ii) The set $Ass(\mathcal F)$ of the associated points to
$\mathcal F$ is finite.

(iii) Any form $f$ of degree $n$ avoiding all elements of
$Ass(\mathcal F)$ induces by multiplication an injective morphism
$ \mathcal F \stackrel{\cdot f}{\longrightarrow} \mathcal F(n)$.
Hence a general form of degree $n$ has this property.

(iv) (\cite{bs}, Exercise 20.4.21) The graded $R$- module
$H^0_*(\mathcal F) $ is finitely generated if and only if
$\Ass(\mathcal F)$ contains no closed points, if and only if
$\depth_{\oo_{\pp^r},_x} (\mathcal F_x) > 0$ for every (closed) $x
\in \pp^r$.
\end{remark}

Now, we can prove that every sheaf has a dissoci\'{e} resolution
of finite length.

\begin{lemma} \label{diss res} Let $\mathcal F$ be a sheaf and
let $M$ be a graded submodule of $H^0_*(\mathcal F)$. Then
\begin{enumerate}
\item any general linear form induces by multiplication an
injective map $M \to M(1)$; \item if M is finitely generated then
$pd(M) \le r$; \item $\mathcal F$ admits a dissoci\'{e} resolution
of length $\le r$.
\end{enumerate}
\end{lemma}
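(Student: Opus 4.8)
The plan is to establish the three assertions in order, each feeding the next. The one point that needs care at the outset is that in (1) the submodule $M$ is \emph{not} assumed finitely generated, so I would avoid any argument relying on finiteness of $\Ass(M)$ and instead argue at the sheaf level. By Remark~\ref{ass}(iii), applied with $n=1$, a general linear form $\ell$ induces an injective morphism of sheaves $\mathcal F \stackrel{\cdot \ell}{\longrightarrow} \mathcal F(1)$. Since $H^0_*$ is left exact and $H^0_*(\mathcal F(1)) = H^0_*(\mathcal F)(1)$, applying it preserves injectivity, so $\cdot\ell \colon H^0_*(\mathcal F) \to H^0_*(\mathcal F)(1)$ is injective. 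Restricting this injection to the submodule $M$ yields the desired injective multiplication map $M \to M(1)$, proving (1).

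For (2), I would assume $M$ finitely generated and feed in (1): a general linear form $\ell$ is then a non zero--divisor on $M$, so $\depth(M) \ge 1$, equivalently $\mathfrak m \notin \Ass(M)$, where $\mathfrak m = (x_0, \dots, x_r)$. Since $R$ is the polynomial ring in $r+1$ variables we have $\depth(R) = r+1$, and the Auslander--Buchsbaum formula $\pd(M) + \depth(M) = \depth(R)$ gives $\pd(M) = r+1 - \depth(M) \le r$.

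For (3), since $\mathcal F$ is coherent I would choose a finitely generated graded module $M \subseteq H^0_*(\mathcal F)$ with $\tilde M = \mathcal F$; for instance the image of $N \to H^0_*(\mathcal F)$ for any finitely generated $N$ with $\tilde N = \mathcal F$. By (2) its minimal free resolution $0 \to L_t \to \dots \to L_0 \to M \to 0$ has length $t = \pd(M) \le r$. Sheafifying, which is exact, produces $0 \to \mathcal L_t \to \dots \to \mathcal L_0 \to \mathcal F \to 0$ with each $\mathcal L_i = \tilde L_i$ dissoci\'{e}, a dissoci\'{e} resolution of length $\le r$.

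I expect the only genuinely delicate step to be (1), precisely because $M$ is an arbitrary, possibly non--finitely--generated, submodule; routing through the left--exact functor $H^0_*$ is what lets me bypass the lack of control on $\Ass(M)$ and reduce the injectivity to the sheaf statement already recorded in Remark~\ref{ass}(iii). The remaining parts are then formal, the only points requiring attention being the correct value $\depth(R) = r+1$ in Auslander--Buchsbaum and the existence of a finitely generated $M$ sheafifying to $\mathcal F$.
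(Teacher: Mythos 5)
Your proposal is correct and follows the same route as the paper: Remark~\ref{ass}(iii) for (1), depth $\ge 1$ plus Auslander--Buchsbaum for (2), and sheafifying the minimal free resolution of a finitely generated submodule $M$ with $\tilde M = \mathcal F$ for (3). The paper leaves (1) as "follows easily"; your detour through the left--exactness of $H^0_*$ is a clean way to fill that in without worrying about $\Ass(M)$ for a non--finitely--generated $M$.
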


\begin{proof} (1) follows easily from Remark \ref{ass}(iii).

(2) By (1) we have $\depth(M) \ge 1$ and the conclusion follows by
the Auslander-Buchsbaum formula (\cite{ei}, Exercise 19.8).

(3) Since $\mathcal F$ is coherent we have $\mathcal F= \tilde M$,
where M is a suitable finitely generated graded submodule of
$H^0_*(\mathcal F)$ (\cite {hart}, Ch. II, proof of Theorem 5.19).
The conclusion follows from (2), because we get a dissoci\'{e}
resolution of $ \mathcal F $ by sheafifying the minimal free
resolution of $ M.$
\end{proof}

Following (\cite{gr}, Section 2), we define the minimal
dissoci\'{e} resolution of a coherent sheaf.

\begin{definition} \label{min res} Let $ \mathcal
P $ be a sheaf such that $ P : = H^0_*(\mathcal P)$ is finitely
generated. Let $$ 0 \to H_d \to \dots \to H_0 \to  P \to 0
$$ be the minimal free resolution of the $ R$--module P .
We' ll call {\em minimal dissoci\'{e} resolution} of $ \mathcal P
$ the exact sequence $$ 0 \to \mathcal{H}_d \to \dots \to
\mathcal{H}_0 \to \mathcal P \to 0 $$ obtained by sheafifying the
minimal free resolution of $ P.$ (Recall that $\tilde P = \mathcal
P$ by (\cite{hart}, Ch. II, Proposition 5.4)).

Moreover, we define the {\em projective dimension} of $ \mathcal P
$ as $ \pd(\mathcal P) := \pd(P).$
\end{definition}

\begin{remark} \rm It is known that there exist many submodules of
$ P = H^0_*(\mathcal P) $ whose associated sheaf is $ \mathcal P:$
in fact, it is enough that such a submodule $ M $ agrees with $ P
$ for some large degree on. Of course, the sheafification of a
minimal free resolution of $ M $ is still a dissoci\'{e}
resolution of $ \mathcal P,$ and no map is split. However, the
resolution of $ M $ is longer than the minimal one. In fact, from
the short exact sequence of modules $$ 0 \to M \to P \to P/M \to
0,$$ we get that $ \pd(M) = r,$ because $ P/M $ is an Artinian
module. Hence, $ \pd(M) \geq \pd(P),$ as we claimed.
\end{remark}

\begin{remarks} \label{rem-pd} \rm (i) Clearly $\pd( \mathcal P) = 0$
if and only if $ \mathcal P $
dissoci\'{e}.

\noindent (ii) $\pd(\mathcal P) \le r$ whenever defined (Lemma
\ref{diss res}(2) applied with $M= H^0_*(\mathcal  P)$).
\end{remarks}

The next Lemma gives a bound for the projective dimension of a
torsion-free sheaf.

\begin{lemma} \label{initial lemma} Let $ \mathcal P $ be a torsion-free
coherent sheaf on $ \pp^r,$ and let $ P= H^0_*(\mathcal P).$ Then:
\begin{enumerate}
\item $ P $ is finitely generated; \item $P$ torsion free; \item $
\mathcal P $ is a subsheaf of a coherent dissoci\'{e} sheaf; \item
$\pd(P) = \pd(\mathcal P) \leq r-1$.
\end{enumerate}
\end{lemma}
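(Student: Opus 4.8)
The plan is to dispatch the four assertions in order, leaning on the preliminary results, and to concentrate the real work in the depth estimate behind (4). For (1) I would invoke the finiteness criterion of Remark \ref{ass}(iv): $P = H^0_*(\mathcal P)$ is finitely generated exactly when $\Ass(\mathcal P)$ has no closed point, equivalently when $\depth_{\oo_{\pp^r},_x}(\mathcal P_x) > 0$ for every closed $x$. So it suffices to see that a torsion--free sheaf has positive depth at each closed point, and indeed at such a point $\mathcal P_x$ is torsion--free over the regular local domain $\oo_{\pp^r,x}$ of dimension $r \geq 1$, so any nonzero element of the maximal ideal is a nonzerodivisor on $\mathcal P_x$ and $\depth(\mathcal P_x) \geq 1$. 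For (2) I would argue directly that every nonzero homogeneous $f \in R$ is a nonzerodivisor on $P$: on a standard chart $D_+(x_i)$ multiplication by $f$ is multiplication by the nonzero dehomogenization of $f$ on the torsion--free $\oo_{\pp^r}(D_+(x_i))$--module $\mathcal P(D_+(x_i))$, hence injective there, so a global section annihilated by $f$ restricts to $0$ on every chart and therefore vanishes.

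For (3) I would combine (1) and (2) with the standard fact that a finitely generated torsion--free graded module over the domain $R$ embeds into a finitely generated graded free module. Concretely, $P$ injects into $P \otimes_R K_h$, where $K_h$ denotes the localization of $R$ at its nonzero homogeneous elements; since $K_h$ is a graded field, $P \otimes_R K_h$ is graded free, and clearing a common homogeneous denominator of the images of finitely many generators yields a degree--preserving injection $P \hookrightarrow \bigoplus_j R(b_j) =: L$. Sheafifying (which is exact) and using $\mathcal P = \tilde P$ from Definition \ref{min res} produces an injection $\mathcal P \hookrightarrow \tilde L$ into the dissoci\'{e} sheaf $\tilde L$.

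For (4) the equality $\pd(P) = \pd(\mathcal P)$ is the definition (Definition \ref{min res}), so the content is the bound $\pd(P) \leq r-1$. Lemma \ref{diss res}(2) already gives $\pd(P) \leq r$, and the main obstacle is to improve this by one. I would route this through the Auslander--Buchsbaum formula $\pd(P) + \depth(P) = r+1$, which reduces the claim to $\depth(P) \geq 2$. Part (2) yields only $\depth(P) \geq 1$, so the decisive extra input is that $P$ is saturated: since $P = H^0_*(\mathcal P)$ and $\mathcal P = \tilde P$, the comparison map $P \to H^0_*(\tilde P)$ is an isomorphism, and feeding this into the exact sequence $0 \to H^0_{\mathfrak m}(P) \to P \to H^0_*(\tilde P) \to H^1_{\mathfrak m}(P) \to 0$ (for $\mathfrak m = (x_0,\dots,x_r)$) forces $H^0_{\mathfrak m}(P) = H^1_{\mathfrak m}(P) = 0$, which is precisely $\depth(P) \geq 2$ (\cite{bs}). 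I expect this saturation/local--cohomology step to be the crux; the remaining arguments are routine bookkeeping with the preliminaries.
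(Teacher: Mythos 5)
Your treatment of (1)--(3) matches the paper's own proof: (1) via Remark \ref{ass}(iv) plus the observation that a nonzero torsion--free stalk over the regular local ring $\oo_{\pp^r,x}$ (of dimension $r\ge 1$) has positive depth; (2) by transporting injectivity of multiplication by a nonzero form from $\mathcal P$ to its global sections; (3) by the standard embedding of the finitely generated torsion--free graded module $P$ into a graded free module $L$ and sheafifying. For (4) you take a genuinely different route. The paper leans on (3): from $0 \to \mathcal P \to \mathcal L \to \mathcal F \to 0$ it extracts $0 \to P \to L \to M \to 0$ with $M=L/P$ a finitely generated graded submodule of $H^0_*(\mathcal F)$, applies Lemma \ref{diss res}(2) to get $\pd(M)\le r$, and concludes $\pd(P)\le r-1$ by shifting one step along the syzygy sequence. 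You instead work directly on $P$: since $P=H^0_*(\mathcal P)$ and $\tilde P=\mathcal P$, the module $P$ is saturated, so $H^0_{\mathfrak{m}}(P)=H^1_{\mathfrak{m}}(P)=0$, hence $\depth(P)\ge 2$, and Auslander--Buchsbaum ($\pd(P)+\depth(P)=r+1$) gives the bound. The two arguments are equivalent in substance --- the paper's $\depth(M)\ge 1$ becomes your $\depth(P)\ge 2$ via the depth lemma applied to $0\to P\to L\to M\to 0$ --- but yours makes the role of saturation explicit and frees part (4) from any dependence on part (3), at the cost of importing the local--cohomology characterization of depth from \cite{bs} rather than reusing Lemma \ref{diss res}(2). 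Both proofs are correct.
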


\begin{proof}
(1) It follows easily from Remark \ref{ass}(iv).

(2) Whatever non zero form $f \in R$ of degree $n$  induces, by
multiplication, an injective morphism $ \mathcal P \stackrel{\cdot
f}{\longrightarrow} \mathcal P (n), $ and consequently an
injective homomorphism $ P \stackrel{\cdot f}{\longrightarrow}
P(n).$

(3) By (1) and (2),  $ P $ is a torsion--free $R$-module and hence
it is a graded submodule of a free $R$--module $L$.  Then, $
\mathcal P = \tilde P $ is a subsheaf of $\mathcal L:= \tilde L $
and the claim follows.

(4) By (3) there are a sheaf $\mathcal F$ and an exact sequence $0
\to \mathcal P \to \mathcal L \to \mathcal F \to 0$, whence an
exact sequence of $R$-modules:
$$0\to  P \to L \to M \to 0,$$
where $M$ is a graded submodule of $H^0_*(\mathcal F)$. By Lemma
\ref{diss res}(2) we have $\pd(M) \le r$, whence $\pd(P) \le r-1$.
Now by (1), Definition \ref{min res} applies and the proof is
complete
\end{proof}

From now on, every $ R$--module will be finitely generated, and so
we shall skip this assumption.

It is possible to describe the cohomology of a coherent sheaf, as
we said before.
\begin{lemma} \label{lemma1} Let $ r \geq 3,$
let $ P $ be a $ R$--module and let $ \mathcal P = \tilde P $ be
its associated sheaf. Suppose $ d = \pd(P) < r. $  Then:
\begin{enumerate}
\item $ H^0_*(\mathcal P)=P;$ \item $ H^i_*(\mathcal P)=0 $ for $
1 \leq i \leq  r-d-1;$ \item $ H^{r-d}_*(\mathcal P) \neq 0. $
\item If $ \mathcal P$ is any torsion-free sheaf with $d:=
\pd(\mathcal P),$ then (2) and (3) hold.
\end{enumerate}
\end{lemma}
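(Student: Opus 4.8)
The plan is to reduce all the statements to the graded local cohomology modules $H^i_{\mathfrak{m}}(P)$, where $\mathfrak{m} = (x_0, \dots, x_r)$, and to control their vanishing through the depth of $P$. I would use two standard facts about local cohomology (\cite{bs}): the four--term exact sequence
$$0 \to H^0_{\mathfrak{m}}(P) \to P \to H^0_*(\mathcal P) \to H^1_{\mathfrak{m}}(P) \to 0,$$
together with the isomorphisms $H^i_*(\mathcal P) \cong H^{i+1}_{\mathfrak{m}}(P)$ for $i \geq 1$; and Grothendieck's characterization $\depth(P) = \min\{ i : H^i_{\mathfrak{m}}(P) \neq 0 \}$. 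Since $R$ is Cohen--Macaulay of dimension $r+1$, the Auslander--Buchsbaum formula (\cite{ei}, Exercise 19.8) gives $\depth(P) = r+1-\pd(P) = r+1-d$. Hence $H^i_{\mathfrak{m}}(P) = 0$ for $i < r+1-d$ while $H^{r+1-d}_{\mathfrak{m}}(P) \neq 0$, and this single computation drives the first three conclusions.

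For (1), the hypothesis $d < r$ forces $r+1-d \geq 2$, so $H^0_{\mathfrak{m}}(P) = H^1_{\mathfrak{m}}(P) = 0$; substituting into the four--term sequence yields $P \cong H^0_*(\mathcal P)$. For (2), the comparison isomorphism $H^i_*(\mathcal P) \cong H^{i+1}_{\mathfrak{m}}(P)$ makes the left--hand side vanish exactly when $i+1 < r+1-d$, that is for $1 \leq i \leq r-d-1$. For (3), the same isomorphism gives $H^{r-d}_*(\mathcal P) \cong H^{r-d+1}_{\mathfrak{m}}(P) = H^{\depth(P)}_{\mathfrak{m}}(P)$, which is nonzero by the depth characterization; note that $d < r$ guarantees $r-d \geq 1$, so the comparison isomorphism is legitimate at this spot. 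I expect the only delicate points to be the index bookkeeping across the shift by one in the comparison isomorphism and, for (3), invoking the top nonvanishing at precisely the degree $\depth(P)$ rather than one step off.

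For (4), I would reduce to the parts already proved. Given a torsion--free sheaf $\mathcal P$, set $P := H^0_*(\mathcal P)$; by Lemma \ref{initial lemma} this module is finitely generated and satisfies $\mathcal P = \tilde P$, while Definition \ref{min res} together with Lemma \ref{initial lemma}(4) gives $\pd(P) = \pd(\mathcal P) = d \leq r-1 < r$. Thus the hypotheses of the lemma hold for this specific $P$, and applying (2) and (3) to it yields the two assertions verbatim. The only consistency check is that part (1) then reads $H^0_*(\mathcal P) = P$, which is true by the very choice of $P$, so no circularity arises.
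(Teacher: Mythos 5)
Your argument is correct, but it follows a genuinely different route from the paper's. The paper proves (1)--(3) by induction on $d=\pd(P)$: the base case $d=0$ is the standard cohomology of dissoci\'{e} sheaves; for $d=1$ the non--splitting of the sheafified resolution gives $\ext^1(\mathcal P,\oo_{\pp^r}(k))\neq 0$ for some $k$, and Serre duality converts this into $H^{r-1}_*(\mathcal P)\neq 0$; for $d\geq 2$ one climbs the syzygy sequence $0\to P_1\to G\to P\to 0$ and uses the long exact cohomology sequence together with the inductive hypothesis on $P_1$. You instead do the whole computation in one stroke through graded local cohomology: Auslander--Buchsbaum gives $\depth(P)=r+1-d$, Grothendieck's vanishing and non--vanishing theorems locate $\min\{i:H^i_{\mathfrak m}(P)\neq 0\}$ exactly at $r+1-d$, and the Serre--Grothendieck comparison sequence (the four--term sequence plus $H^i_*(\mathcal P)\cong H^{i+1}_{\mathfrak m}(P)$ for $i\geq 1$) translates this into the three assertions, with $d<r$ ensuring both that $H^0_{\mathfrak m}(P)=H^1_{\mathfrak m}(P)=0$ for (1) and that the comparison isomorphism applies at the index $r-d\geq 1$ for (3). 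Your index bookkeeping is right throughout, and the standing assumption in the paper that all $R$--modules are finitely generated (stated just before the Lemma) covers the hypotheses needed for Grothendieck's theorem. Your version is shorter, avoids induction, and identifies the critical degree $r-d$ conceptually as $\depth(P)-1$, at the cost of invoking the full strength of the local cohomology machinery (available in \cite{bs}, which the paper already cites); the paper's version stays within elementary sheaf cohomology on $\pp^r$ and Serre duality, and the syzygy--shifting technique it makes explicit is the same one reused in the surrounding results. Part (4) is handled identically in both proofs: set $P:=H^0_*(\mathcal P)$ and apply Lemma \ref{initial lemma}.
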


\begin{proof} We prove claims (1) (2) (3) together,  by induction on $ d.$

If $ d=0,$ the sheaf $ \mathcal P $ is dissoci\'{e} and the claims
hold by (\cite{hart}, Ch. III, Theorem 5.1).

If $d=1$ we have a non--split exact sequence
$$0\to L_1 \to L_0 \to P \to 0$$
with $ L_1 $ and $ L_0 $ free. By passing to sheaves, we get a
non--split exact sequence
\begin{equation}\label{d=1}0\to \mathcal L_1 \to \mathcal L_0
\to \mathcal P \to 0,\end{equation} whence $ \ext^1( \mathcal P,
\mathcal L_1) \neq 0$. It follows easily that $ \ext^1(\mathcal P,
\oo_{\pp^r}(k)) \neq 0 $ for some $ k \in \zz$. On the other hand
by duality and properties of Ext we get $ H^{r-1}(\pp^r, \mathcal
P(-k-r-1)) \cong \ext^1( \mathcal P(-k-r-1), \omega_{\pp^r})
 \cong \ext^1(\mathcal P , \oo_{\pp^r} (k)),$ whence (3).
Since (1) and (2) are immediate from the exact sequence
(\ref{d=1}), the statement holds for $d=1$ as well.

Assume now $d\ge 2$. We have an exact sequence
$$ 0 \to P_1 \to G \to  P \to 0 $$
where $ G $ is a free $R-$module and $ P_1 $ is a $ R$--module
with $ \pd(P_1)=d-1.$ In fact, it is enough to consider the first
short exact sequence that can be obtained from the minimal free
resolution of $ P,$ as explained before. By taking the sheaves
associated to each item, we get the short exact sequence of
sheaves
\begin{equation} \label{p1} 0 \to \mathcal P_1 \to \mathcal G \to
\mathcal P \to 0.
\end{equation}
By induction, we may assume that $ H^0_*(\mathcal P_1)=P_1,$ $
H^i_*(\mathcal P_1)=0 $ for $ 1 \leq i \leq r-(d-1)-1=r-d,$ and
that $ H^{r-d+1}_*(\mathcal P_1) \not= 0.$

By assumption, $ d < r,$ and so $ r-d \geq 1.$ In particular, $
H^1_*(\mathcal P_1)=0. $

By taking the cohomology sequence associated to (\ref{p1}) and
using the assumptions on the cohomology  of $ \mathcal P_1 $ we
get the conclusion.

To prove (4) set $P:= H^0_*(\mathcal P)$. Then by definition and
by Lemma \ref{initial lemma} we have $d = \pd(P)<r$. Since $\tilde
P= \mathcal P$ the conclusion follows by (2) and (3).
\end{proof}

The previous Lemma allows us to generalize Horrocks' splitting
criterion (\cite{okonek}, Theorem 2.3.1) to torsion--free sheaves,
with a completely different proof.
\begin{corollary} A torsion--free sheaf $ \mathcal P $ over
$ \pp^r $ is dissoci\'{e} precisely when $ H^i_*(\mathcal P) = 0 $
for $ i = 1, \dots, r-1.$
\end{corollary}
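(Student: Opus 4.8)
The plan is to read off both implications directly from the cohomological description of projective dimension already established, together with the fact (Remarks \ref{rem-pd}(i)) that a sheaf is dissoci\'{e} exactly when its projective dimension is $0$. Thus the entire statement reduces to showing that, for a torsion--free sheaf $\mathcal P$, the vanishing $H^i_*(\mathcal P) = 0$ for $1 \le i \le r-1$ is equivalent to $\pd(\mathcal P) = 0$.

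For the forward implication I would suppose $\mathcal P$ is dissoci\'{e}, so that $\pd(\mathcal P) = 0$. Since a dissoci\'{e} sheaf is torsion--free, Lemma \ref{lemma1}(4) applies with $d = 0$: part (2) yields $H^i_*(\mathcal P) = 0$ throughout the range $1 \le i \le r-d-1 = r-1$, which is precisely the asserted vanishing. (Equivalently, this is the base case of Lemma \ref{lemma1}, namely the vanishing of the intermediate cohomology of $\oplus_i \oo_{\pp^r}(a_i)$ coming from (\cite{hart}, Ch. III, Theorem 5.1).)

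For the converse---the substantive direction---I would argue by contraposition. Assume $\mathcal P$ is not dissoci\'{e}, so that $d := \pd(\mathcal P) \ge 1$ by Remarks \ref{rem-pd}(i). Because $\mathcal P$ is torsion--free, Lemma \ref{initial lemma}(4) bounds $d \le r-1$. Combining the two inequalities gives $1 \le r-d \le r-1$, so the index $r-d$ lies inside the range $\{1,\dots,r-1\}$. But Lemma \ref{lemma1}(4), part (3), asserts $H^{r-d}_*(\mathcal P) \ne 0$. Hence some intermediate cohomology module is nonzero, i.e.\ the vanishing hypothesis fails; this proves the contrapositive and with it the corollary.

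There is essentially no hard analytic step remaining, since all the content has been front-loaded into Lemma \ref{lemma1}(4) (the exact value $r-d$ of the top non-vanishing cohomological degree) and into the bound $\pd(\mathcal P) \le r-1$ of Lemma \ref{initial lemma}(4). The only point that needs care is the index bookkeeping in the converse: one must check that $r-d$ genuinely falls in the interval $\{1,\dots,r-1\}$, and this is exactly where both hypotheses are used---$d \ge 1$ (non-dissoci\'{e}) forces $r-d \le r-1$, while the torsion--free bound $d \le r-1$ guarantees $r-d \ge 1$, so that Lemma \ref{lemma1}(4)(3) produces a nonzero module strictly inside the forbidden range rather than at $i=0$ or $i=r$.
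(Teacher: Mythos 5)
Your proof is correct and follows essentially the same route as the paper: both directions are read off from Lemma \ref{lemma1}(4) (vanishing in the range $1,\dots,r-d-1$ and non-vanishing at $r-d$), with the bound $\pd(\mathcal P)\le r-1$ from Lemma \ref{initial lemma}(4) ensuring that $r-d$ falls inside $\{1,\dots,r-1\}$ when $d\ge 1$. The paper phrases the converse directly rather than by contraposition, but the argument is identical; your explicit index bookkeeping only makes visible what the paper leaves implicit.
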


\begin{proof} Assume $ H^i_*(\mathcal P) = 0$
for $ i = 1, \dots, r-1,$ and set $d:= \pd(\mathcal P).$ By Lemma
\ref{lemma1} (4) we have $ H^i_*(\mathcal P) = 0 $ for $ i = 1,
\dots, r-1-d $ and $ H^{r-d}_*(\mathcal P) \neq 0$. This is
possible only if $d=0$, i.e. if $\mathcal P$ is dissoci\'e. The
converse is clear.
\end{proof}

Now, we consider the short exact sequence (\ref{pnd}). Our first
result relates the codimension of $ D $ and the projective
dimension of $ \mathcal P.$

\begin{proposition} Let $ D \subseteq \pp^r $ be a closed scheme of
codimension $ s,$ with $ s \geq 2,$ and let $ P $ a $ R$--module
with $ \pd(P) = d.$ If $ s-2 > d,$ then $ \ext^j_R(\ii_D(k),
\mathcal{P}) = 0 $ for $ j = 1, \dots, s-2-d. $
\end{proposition}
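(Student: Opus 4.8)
The plan is to reduce to the case where the second argument is a single line bundle, by resolving $\mathcal P$, and then to read the vanishing off the cohomology of $\ii_D$. First note that $s-2>d$ forces $d<r$ (since $s\le r$), so by Lemma \ref{lemma1}(1) we have $H^0_*(\mathcal P)=P$ and Definition \ref{min res} applies: $\mathcal P$ has a minimal dissoci\'e resolution $0\to\mathcal H_d\to\cdots\to\mathcal H_0\to\mathcal P\to0$ of length $d$, each $\mathcal H_i$ a finite direct sum of line bundles $\oo_{\pp^r}(a_{ij})$. I would split this into short exact sequences $0\to\mathcal Z_i\to\mathcal H_i\to\mathcal Z_{i-1}\to0$, with $\mathcal Z_{-1}=\mathcal P$ and the last term $\mathcal H_d$ itself dissoci\'e, and run the long exact sequence of $\ext^\bullet(\ii_D(k),-)$ in the second variable. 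As long as the Ext groups against the dissoci\'e $\mathcal H_i$ vanish in the relevant degrees, dimension shifting gives $\ext^j(\ii_D(k),\mathcal P)\cong\ext^{j+d}(\ii_D(k),\mathcal H_d)$, so the whole statement collapses to the base case $\ext^m(\ii_D(k),\oo_{\pp^r}(a))=0$ for $1\le m\le s-2$ and every $a$.

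For the base case I would use Serre duality to rewrite $\ext^m(\ii_D(k),\oo_{\pp^r}(a))\cong H^{r-m}(\pp^r,\ii_D(k-a-r-1))^\vee$; thus it suffices to show $H^i_*(\ii_D)=0$ for $r-s+2\le i\le r-1$, which is exactly the range that $r-m$ traverses as $m$ runs from $1$ to $s-2$. This cohomological vanishing I would extract from the structure sequence $0\to\ii_D\to\oo_{\pp^r}\to\oo_D\to0$: because $H^i_*(\oo_{\pp^r})=0$ for $1\le i\le r-1$, the connecting homomorphisms give $H^i_*(\ii_D)\cong H^{i-1}_*(\oo_D)$ for $2\le i\le r-1$, and Grothendieck vanishing on the $(r-s)$-dimensional scheme $D$ kills $H^{i-1}_*(\oo_D)$ once $i-1>r-s$. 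This produces precisely $H^i_*(\ii_D)=0$ for $r-s+2\le i\le r-1$, completing the base case.

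The step that needs the most care is the bookkeeping of degrees in the shift: at the $i$-th short exact sequence one needs both $\ext^{j+i}(\ii_D(k),\mathcal H_i)=0$ and $\ext^{j+i+1}(\ii_D(k),\mathcal H_i)=0$ in order to identify $\ext^{j+i}(\ii_D(k),\mathcal Z_{i-1})$ with $\ext^{j+i+1}(\ii_D(k),\mathcal Z_i)$; since the base case only guarantees vanishing in degrees $1,\dots,s-2$, the binding constraint comes from the last term $\mathcal H_d$ and reads $j+d\le s-2$, i.e.\ $j\le s-2-d$, and the hypothesis $s-2>d$ is exactly what makes this range nonempty. A variant that sidesteps Serre duality, and that also covers the reading of $\ext_R$ as graded $R$-module Ext, is to treat the base case algebraically: $\ext^q_R(R/I_D,R)=0$ for $q<s$ because $\operatorname{grade}(I_D,R)=\operatorname{ht}(I_D)=s$ (as $R$ is Cohen--Macaulay), and feeding this through the long exact sequence of $0\to I_D\to R\to R/I_D\to0$, together with the same dimension shift along a free resolution of $P$, yields the identical vanishing window.
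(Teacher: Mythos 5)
Your argument is correct and is essentially the paper's own proof: both reduce to the dissoci\'e case along a free resolution of $P$ (the paper by induction on $d=\pd(P)$, you by telescoping the dimension shift through the syzygies, which is the same computation and yields the same binding constraint $j+d\le s-2$), and both settle the base case $\ext^m(\ii_D(k),\oo_{\pp^r}(a))=0$ for $1\le m\le s-2$ by Serre duality on $\pp^r$ followed by Grothendieck vanishing on the $(r-s)$-dimensional scheme $D$. Your alternative algebraic base case via the grade of $I_D$ is a harmless variant not taken by the paper.
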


\begin{proof} We prove the claim by induction on $ d.$

If $ d = 0,$ that is to say $ P $ is a free module, then there
exist $ a_1, \dots, a_n \in \mathbb{Z} $ such that $ P =
\oplus_{i=1}^n R(-a_i).$ By using standard properties of $ \ext $
groups, we have \begin{equation*} \begin{split}
\ext^j&_R(\ii_D(k), \mathcal{P}) = \oplus_{i=1}^n
\ext^j_R(\ii_D(k), \oo_{\pp^r}(-a_i)) = \\ & = \oplus_{i=1}^n
\ext^j_R(\ii_D(k+a_i-r-1), \omega_{\pp^r}) \cong \\ & \cong
\oplus_{i=1}^n H^{r-j}(\pp^r, \ii_D(k+a_i-r-1)) = \\ & =
\oplus_{i=1}^n H^{r-j-1}(D, \oo_D(k+a_i-r-1)) = 0
\end{split}
\end{equation*} as soon as $ r-j-1 > r-s $ by Grothendieck's vanishing
Theorem (\cite{hart}, Ch.III, Theorem 2.7), where $ \omega_{\pp^r}
= \oo_{\pp^r}(-r-1) $ is the canonical sheaf of $ \pp^r.$ Hence, $
\ext^j(\ii_D(k), \mathcal{P}) = 0 $ for $ j = 1, \dots, s-2 $ and
for every $ k \in \mathbb{Z},$ and the claim holds for $ d = 0.$

Assume $ d > 0 $ and the claim to hold for every  $ R$--module
with projective dimension $ d-1.$  As in the proof of Lemma
\ref{lemma1}, we consider the short exact sequence
$$ 0 \to P_1 \to G \to P \to 0 $$ with $ G $ free  and $ P_1 $ of
projective dimension $ d-1.$ By applying $ \Hom(\ii_D(k), -) $ to
the sheafified sequence, we get the exact sequence $$
\ext^i(\ii_D(k), \mathcal{G}) \to \ext^i(\ii_D(k), \mathcal{P})
\to \ext^{i+1}(\ii_D(k), \mathcal{P}_1) \to \ext^{i+1}(\ii_D(k),
\mathcal{G}).$$ From the first part of the proof, we get that $
\ext^i(\ii_D(k), \mathcal{G}) = \ext^{i+1}(\ii_D(k), \mathcal{G})
= 0 $ for every $ k $ and for $ i = 1, \dots, s-3.$ From the
induction assumption, $ \ext^{i+1}(\ii_D(k), \mathcal{P}_1) = 0 $
for every $ k $ and for $ i = 0, \dots, s-2-d.$ Hence, $
\ext^i(\ii_D(k), \mathcal{P}) = 0 $ for every $ k \in \mathbb{Z} $
and for $ i = 1, \dots, s-2-d $ as claimed.
\end{proof}

A direct consequence of the previous Proposition is that we can
predict if $ N $ is the direct sum of $ P $ and $ I_D.$ In fact it
holds:
\begin{corollary}\label{split} Let $ D \subseteq \pp^r $ be a closed scheme of
codimension $ s \geq 2,$ and let $ P $ be a $ R$--module
satisfying $ s-2 > \pd(P).$ Then, the only extension of $ \ii_D(k)
$ with $ \mathcal{P} $ is the trivial one, for every choice of $ k
\in \zz.$ Consequently if there is a non--split exact sequence
(\ref{pnd}), we must have $s \leq \pd(P) + 2.$
\end{corollary}

\begin{proof} The previous Proposition shows that $ \ext^1(\ii_D(k),
\mathcal{P}) = 0 $ and the claim follows.
\end{proof}

Now, we take into account the cohomology of $ D $ to get a bound
on the projective dimension of $ \mathcal N.$
\begin{proposition} \label{pd(N) geq pd(P)+2} Let $ D \subset
\pp^r $ be a closed scheme, and let $ \mathcal{P}, \mathcal{N} $
be torsion--free sheaves such that the short sequence (\ref{pnd})
is exact. If $ \pd(\mathcal N) \geq \pd(\mathcal P) + 2,$ then $
H^{r-\pd(\mathcal N)}_*(\ii_D) \not= 0. $

Conversely, if $ H^j_*(\ii_D) \not= 0 $ for some $ j \in
\mathbb{Z} $ with $ 1 \leq j \leq r - 2 - \pd(\mathcal P),$ then $
\pd(\mathcal N) \geq \pd(\mathcal P) + 2.$
\end{proposition}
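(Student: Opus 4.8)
The plan is to read everything off the long exact cohomology sequence attached to (\ref{pnd}) and to feed in the cohomological characterization of projective dimension furnished by Lemma \ref{lemma1}. Write $p := \pd(\mathcal P)$ and $n := \pd(\mathcal N)$. Since $\mathcal P$ and $\mathcal N$ are torsion--free, Lemma \ref{initial lemma}(4) gives $p, n \le r-1 < r$, so Lemma \ref{lemma1}(4) applies to both sheaves: we have $H^i_*(\mathcal P) = 0$ for $1 \le i \le r-p-1$ together with $H^{r-p}_*(\mathcal P) \ne 0$, and likewise $H^i_*(\mathcal N) = 0$ for $1 \le i \le r-n-1$ together with $H^{r-n}_*(\mathcal N) \ne 0$. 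Applying $H^0_*$ and its derived functors to (\ref{pnd}), and recalling that twisting by $k$ merely re-indexes the grading and so affects neither vanishing nor non--vanishing of the graded modules, I obtain the long exact sequence
$$\cdots \to H^i_*(\mathcal P) \to H^i_*(\mathcal N) \to H^i_*(\ii_D) \to H^{i+1}_*(\mathcal P) \to \cdots.$$

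For the first assertion, I would assume $n \ge p+2$ and set $i = r-n$. Here $n \le r-1$ gives $1 \le r-n$, while the hypothesis gives $r-n \le r-p-2$. Hence both flanking $\mathcal P$--terms $H^{r-n}_*(\mathcal P)$ and $H^{r-n+1}_*(\mathcal P)$ sit in the vanishing window $1 \le \bullet \le r-p-1$ and therefore vanish. The sequence then collapses to an isomorphism $H^{r-n}_*(\mathcal N) \cong H^{r-n}_*(\ii_D)$, and since the left--hand side is non--zero by Lemma \ref{lemma1}, I conclude $H^{r-\pd(\mathcal N)}_*(\ii_D) \ne 0$.

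For the converse I would argue by contraposition. Suppose $n \le p+1$ and let $j$ satisfy $1 \le j \le r-2-p$. Because $n \le p+1$ we have $r-n-1 \ge r-p-2 \ge j$, so $H^j_*(\mathcal N) = 0$; and because $j+1 \le r-p-1$ we have $H^{j+1}_*(\mathcal P) = 0$. The segment $H^j_*(\mathcal N) \to H^j_*(\ii_D) \to H^{j+1}_*(\mathcal P)$ then reads $0 \to H^j_*(\ii_D) \to 0$, forcing $H^j_*(\ii_D) = 0$ for every such $j$. This is precisely the negation of the converse hypothesis, so if $H^j_*(\ii_D) \ne 0$ for some $j$ with $1 \le j \le r-2-\pd(\mathcal P)$, then necessarily $\pd(\mathcal N) \ge \pd(\mathcal P) + 2$.

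The only delicate point is the index bookkeeping: one must verify that the inequality $n \ge p+2$ (respectively $n \le p+1$) places exactly the two neighbouring cohomology groups of $\mathcal P$ (respectively the relevant group of $\mathcal N$) inside their vanishing windows, so that the long exact sequence degenerates as required. No further input is needed; the statement is a direct consequence of Lemma \ref{lemma1} and the functoriality of the sequence $H^i_*(-)$.
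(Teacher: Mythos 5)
Your proposal is correct and follows essentially the same route as the paper: the long exact cohomology sequence of (\ref{pnd}) combined with the vanishing/non--vanishing ranges of Lemma \ref{lemma1}(4) and the bound $\pd < r$ from Lemma \ref{initial lemma}. The only cosmetic difference is that you prove the converse by contraposition (via vanishing of $H^j_*(\mathcal N)$) where the paper argues directly from $H^j_*(\mathcal N)\neq 0$; the index bookkeeping checks out in both directions.
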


\begin{proof}  By Lemma \ref{initial lemma},
we have that $ \pd(\mathcal P) $ and
$ \pd(\mathcal N) $ are strictly smaller than $ r.$   By taking
the long exact cohomology sequence associated to (\ref{pnd}), we
get
$$ H^i_*(\mathcal{P}) \to H^i_*(\mathcal{N}) \to H^i_*(\ii_D) \to
H^{i+1}_*(\mathcal{P}).$$ From Lemma \ref{lemma1}, we know that $
H^j_*(\mathcal{P}) = 0 $ for $ j = 1, \dots, r -\pd(\mathcal P)-
1,$ and so $ H^i_*(\mathcal{N}) \cong H^i_*(\ii_D) $ for $ i = 1,
\dots, r - \pd(\mathcal P) - 2.$

If $ \pd(\mathcal P) + 2 \leq \pd(\mathcal N) < r,$ then $ 1 \leq
r - \pd(\mathcal N) < r - \pd(\mathcal P) - 1.$ Hence, $
H^{r-\pd(\mathcal N)}_*(\mathcal{N}) \cong H^{r-\pd(\mathcal
N)}_*(\ii_D) $ and we get the claim by Lemma \ref{lemma1}.

Assume now that $ H^j(\ii_D(k)) \not= 0 $ for some $ k \in
\mathbb{Z} $ and some $ j $ such that $ 1 \leq j \leq r -
\pd(\mathcal P) - 2.$ Hence, $ H^j_*(\mathcal{N}) \not= 0.$ Again
by Lemma \ref{lemma1}, $ r - \pd(\mathcal N) \leq j $ and so $
\pd(\mathcal N) \geq \pd(\mathcal P) + 2.$
\end{proof}

\begin{remark} \rm In the second part of the previous Proposition, the
hypothesis on $ j $ implies that $ \pd(\mathcal P) \leq r-3.$ This
last inequality is not automatically fulfilled. In fact, let $ D
\subseteq \pp^r $ be a locally Cohen--Macaulay curve with $ H^1_*
\ii_D \not= 0.$ Let $$ 0 \to G_r \to \dots \to G_2 \to G_1 \to I_D
\to 0 $$ be the minimal free resolution of $ I_D $ and let $ P =
\ker(G_1 \to I_D).$ Then, $ \pd(P) = r-2 = \pd(I_D) - 1,$ and $
\pd(G_1) = 0.$ Hence, we cannot apply the previous Proposition to
the short exact sequence $ 0 \to P \to G_1 \to I_D \to 0.$
Nevertheless, it could exist a different short exact sequence $ 0
\to Q \to N \to I_D \to 0 $ with $ \pd(Q) = r-3.$ In this case, $
\pd(N) = r-1.$ Notice that $ r-3 $ is the smallest projective
dimension allowed for the first item of the sequence, because of
the codimension of $ D.$
\end{remark}

\begin{remark} \rm The case considered in the previous
Proposition, namely $ \pd(\mathcal N) \geq \pd(\mathcal P) + 2,$
occurs in the $ \mathcal N$--type resolution of the ideal sheaf of
a locally Cohen--Macaulay curve in $ \pp^3 $ (\cite{mdp}, Ch. II,
Section 4). In that case, $ \mathcal P $ is dissoci\'{e} and $
\pd(\mathcal N) = 2,$ where $ N $ is the second syzygy module of
the Hartshorne--Rao module (graded Artinian $ R$--module) of the
curve, up to a free summand.
\end{remark}

Now, we stress some consequences of the previous Proposition that
we' ll use in next sections.
\begin{corollary} \label{acm and pd} Consider an exact sequence
{\rm (\ref{pnd})} where $D$ has codimension $s \ge 2$. If $D$ is
ACM and the sequence is non-split we have
\begin{equation}\label {pd(N)}
\pd(\mathcal N) \leq \pd(\mathcal P) +1.
\end{equation}
\end{corollary}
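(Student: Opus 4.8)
The plan is to argue by contradiction: assume $\pd(\mathcal N) \ge \pd(\mathcal P) + 2$ and derive a clash between a non-vanishing and a vanishing of intermediate cohomology of $\ii_D$. The engine is Proposition \ref{pd(N) geq pd(P)+2}, whose first half says precisely that under this assumption $H^{r-\pd(\mathcal N)}_*(\ii_D) \neq 0$. So the whole argument reduces to showing that the index $i_0 := r - \pd(\mathcal N)$ falls in the range $1 \le i_0 \le \dim D$, where the ACM hypothesis forces this cohomology group to be zero.

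To locate $i_0$ I would combine two bounds. On one side, $\mathcal N$ is torsion--free, so Lemma \ref{initial lemma}(4) gives $\pd(\mathcal N) \le r-1$, whence $i_0 = r - \pd(\mathcal N) \ge 1$. On the other side, since the sequence (\ref{pnd}) is non--split, Corollary \ref{split} yields $s \le \pd(\mathcal P) + 2$; combining this with the contradiction hypothesis $\pd(\mathcal P) + 2 \le \pd(\mathcal N)$ gives $\pd(\mathcal N) \ge s$, hence $i_0 = r - \pd(\mathcal N) \le r - s = \dim D$. Together these show $1 \le i_0 \le \dim D$.

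At this point I would invoke the ACM hypothesis in the cohomological form recalled in Section 2, namely $H^i_*(\ii_D) = 0$ for $1 \le i \le \dim D$. In particular $H^{i_0}_*(\ii_D) = 0$, which contradicts the non--vanishing produced by Proposition \ref{pd(N) geq pd(P)+2}. Thus the assumption $\pd(\mathcal N) \ge \pd(\mathcal P) + 2$ is untenable, and we conclude $\pd(\mathcal N) \le \pd(\mathcal P) + 1$, which is (\ref{pd(N)}).

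The only real subtlety, rather than a genuine obstacle, is recognizing that the codimension constraint coming from non--splitness (Corollary \ref{split}) is exactly what pushes the index $i_0$ below the ACM vanishing threshold $\dim D$. Without that input $i_0$ could a priori sit in the range $(\dim D, r)$, where even an ACM scheme has nonzero cohomology of $\ii_D$ (for instance in top degree), and no contradiction would arise; so the non--split hypothesis is used in an essential way.
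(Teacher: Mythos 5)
Your argument is correct and coincides with the paper's own proof: both proceed by contradiction, using Corollary \ref{split} to get $s \le \pd(\mathcal P)+2 \le \pd(\mathcal N)$ and Proposition \ref{pd(N) geq pd(P)+2} to produce a nonzero $H^{r-\pd(\mathcal N)}_*(\ii_D)$ in the range where the ACM hypothesis forces vanishing. Your explicit check that $r-\pd(\mathcal N)\ge 1$ via Lemma \ref{initial lemma}(4) is a small point the paper leaves implicit, but the route is the same.
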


\begin{proof} By Corollary \ref{split} the non--splitting
of the sequence (\ref{pnd})
implies that $ s \leq \pd(\mathcal P) + 2.$ If $ \pd(\mathcal N)
\geq \pd(\mathcal P) + 2,$ then $ H^{r-\pd(\mathcal N)}_*(\ii_D)
\not= 0 $ by Proposition \ref{pd(N) geq pd(P)+2}. On the other
hand, $ r-\pd(\mathcal N) \leq r-s $ and so $ H^{r-\pd(\mathcal
N)}_*(\ii_D) = 0 $ because $ D $ is ACM. The contradiction proves
that $ \pd(\mathcal N) \leq \pd(\mathcal P) + 1.$
\end{proof}

\begin{remark} \rm If $ \pd(\mathcal N) \leq \pd(\mathcal P) + 1,$
we can only prove that $ H^i_*(\ii_D) = 0 $ for $ i = 1, \dots,
r-\pd(\mathcal P)-2.$ Hence, $ D $ could not be an ACM scheme if $
s < \pd(\mathcal P) + 2.$
\end{remark}

A further problem related to the sequence (\ref{pnd}) is the
following: given the  modules $ P $ and $ N,$ and an injective map
$ \mathcal P \to \mathcal N,$ when is the cokernel an ideal sheaf?
This problem was considered in \cite{mdp}, and we resume their
results.

At first, we recall the definition and some properties of the
maximal subsheaves, generalizing to $ \pp^r $ the one given for
sheaves on $ \pp^3 $ (\cite{mdp}, Ch. IV, D\'{e}finition 1.1).
\begin{definition} Let $ \mathcal{M} \subset \mathcal{N} $ be
$ \oo_{\pp^r}$--modules. $ \mathcal{M} $ is a maximal subsheaf of
$ \mathcal N $ if for all subsheaves $ \mathcal{M'} \subset
\mathcal{N} $ with $ \rk(\mathcal M) = \rk(\mathcal{M'}) $ such
that $ \mathcal{M} \subseteq \mathcal{M'} \subseteq \mathcal{N},$
we have $ \mathcal{M} = \mathcal{M'}.$
\end{definition}

The interest in such subsheaves lies in the following properties.
\begin{proposition} \label{maximal-sub-sheaf} Let $ \mathcal{M}
\subseteq \mathcal{N} $ be $ \oo_{\pp^r}$--modules. Consider the
following properties:
\begin{enumerate}
\item $ \mathcal{M} $ is maximal; \item $ \mathcal{N}/\mathcal{M}
$ is torsion--free; \item $ \mathcal{N}/\mathcal{M} $ is
torsion--free in codimension $ 1;$ \item $ \mathcal{N}/\mathcal{M}
$ is locally free in codimension $ 1;$ \item $
\mathcal{N}/\mathcal{M} $ has constant rank in codimension $ 1;$
\item $  \mathcal{N}/\mathcal{M} $ is locally a direct summand of
$ \mathcal{N} $ in codimension $ 1.$
\end{enumerate}
Then, $ (1) \Leftrightarrow (2) \Rightarrow (3) \Leftrightarrow
(4) \Leftrightarrow (5) \Rightarrow (6).$ Furthermore, if $
\mathcal{N} $ is torsion--free and $ \mathcal{M} $ is locally
free, they all are equivalent.
\end{proposition}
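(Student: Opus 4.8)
The plan is to establish the chain of general implications first and then close it up under the two extra hypotheses. Throughout write $\mathcal Q := \mathcal N/\mathcal M$, and recall that at a point $y$ of $\pp^r$ of codimension $\le 1$ the ring $\oo_{\pp^r,y}$ is regular of dimension $\le 1$, hence a field or a DVR, over which every finitely generated module splits as a free module plus a torsion module. This structural remark is what drives the equivalences among $(3)$, $(4)$, $(5)$.

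First I would do $(1)\Leftrightarrow(2)$ by the rank/torsion dictionary. If $\mathcal M$ is not maximal, choose $\mathcal M\subsetneq\mathcal M'\subseteq\mathcal N$ with $\rk(\mathcal M')=\rk(\mathcal M)$; then $\mathcal M'/\mathcal M$ is a nonzero rank-$0$, hence torsion, subsheaf of $\mathcal Q$, so $(2)$ fails. Conversely, if the torsion subsheaf of $\mathcal Q$ is nonzero, its preimage in $\mathcal N$ strictly contains $\mathcal M$ with the same rank, contradicting maximality. The implication $(2)\Rightarrow(3)$ is immediate. For $(3)\Leftrightarrow(4)\Leftrightarrow(5)$ I would note that all three conditions are tested only at points $y$ of codimension $\le 1$; over the field/DVR $\oo_{\pp^r,y}$ the properties ``torsion--free'', ``free'', and ``fibre dimension equal to the generic rank'' coincide, using the structure theorem over a DVR and the irreducibility of $\pp^r$ (so that the generic rank is well defined). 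Finally $(4)\Rightarrow(6)$, hence $(5)\Rightarrow(6)$: at such $y$ the module $\mathcal Q_y$ is free, so projective, whence the localized sequence $0\to\mathcal M_y\to\mathcal N_y\to\mathcal Q_y\to 0$ splits and $\mathcal M_y$ is a direct summand, which is exactly $(6)$.

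For the last assertion assume $\mathcal N$ torsion--free and $\mathcal M$ locally free; it then suffices to close the cycle by proving $(6)\Rightarrow(3)$ and $(3)\Rightarrow(2)$. The first is easy: at a codimension-$\le 1$ point $(6)$ gives $\mathcal N_y\cong\mathcal M_y\oplus\mathcal Q_y$, and a direct summand of the torsion--free module $\mathcal N_y$ is torsion--free, so $(3)$ holds. The real content is $(3)\Rightarrow(2)$, and I expect the depth computation there to be the main obstacle. Suppose the torsion subsheaf $\mathcal T$ of $\mathcal Q$ is nonzero; by $(3)$ it is supported in codimension $\ge 2$, so it has an associated point $y$ whose codimension is $c\ge 2$, and by Remark \ref{ass}(i) this means $\depth_{\oo_{\pp^r,y}}\mathcal Q_y=0$. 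Localizing $0\to\mathcal M_y\to\mathcal N_y\to\mathcal Q_y\to 0$ and using that $\mathcal M_y$ is free over the regular local ring $\oo_{\pp^r,y}$, so $\depth\mathcal M_y=c$, the depth lemma for short exact sequences yields $\depth\mathcal Q_y\ge\min(\depth\mathcal M_y-1,\ \depth\mathcal N_y)=\min(c-1,\ \depth\mathcal N_y)$.

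Since the left side is $0$ and $c-1\ge 1$, this forces $\depth\mathcal N_y=0$, i.e. $y\in\Ass(\mathcal N)$. But a torsion--free sheaf has no associated point of positive codimension (at such a $y$ the ring $\oo_{\pp^r,y}$ is a domain, and an associated point would give a nonzero element annihilated by a nonzero element of the maximal ideal, contradicting torsion--freeness), so $c=0$, contradicting $c\ge 2$. Hence $\mathcal T=0$ and $(2)$ holds, which closes the cycle $(2)\Rightarrow(3)\Rightarrow(2)$ and, via $(5)\Rightarrow(6)\Rightarrow(3)$, makes all six conditions equivalent. The two delicate points to get right are selecting the correct inequality $\depth\mathcal Q_y\ge\min(\depth\mathcal M_y-1,\depth\mathcal N_y)$ from the depth lemma, and the localization bookkeeping identifying associated points with depth-zero localizations via Remark \ref{ass}(i).
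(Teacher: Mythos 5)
Your proof is correct, but it is worth noting that the paper does not actually reprove this statement: its entire ``proof'' is the observation that (\cite{mdp}, Ch.~IV, Proposition 1.2), stated there for sheaves on $\pp^3$, carries over verbatim to $\pp^r$. So you are supplying a self-contained argument where the paper offers only a citation. The skeleton you use --- the rank/torsion dictionary for $(1)\Leftrightarrow(2)$ (preimage of the torsion subsheaf versus rank-zero quotient $\mathcal M'/\mathcal M$), the field/DVR structure of $\oo_{\pp^r,y}$ at points of codimension $\le 1$ for $(3)\Leftrightarrow(4)\Leftrightarrow(5)$, splitting of the localized sequence for $(4)\Rightarrow(6)$, and, under the extra hypotheses, closing the cycle via $(6)\Rightarrow(3)$ and the depth lemma $\depth C\ge\min(\depth A-1,\depth B)$ together with the identification of associated points with depth-zero stalks from Remark 2.4(i) for $(3)\Rightarrow(2)$ --- is sound and is essentially the standard localization-plus-depth route one expects Martin--Deschamps and Perrin to follow; what your version buys is that the reader sees explicitly where torsion-freeness of $\mathcal N$ and local freeness of $\mathcal M$ enter (namely, only in the final implication). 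Two cosmetic points: in $(4)\Rightarrow(6)$ the splitting of $0\to\mathcal M_y\to\mathcal N_y\to\mathcal Q_y\to 0$ exhibits $\mathcal Q_y$, not merely $\mathcal M_y$, as a direct summand of $\mathcal N_y$, which is what $(6)$ literally asserts; and the degenerate case $\mathcal M=0$ should be dismissed at the outset of $(3)\Rightarrow(2)$ (there $\mathcal Q=\mathcal N$ is torsion-free and nothing is to prove), so that $\depth \mathcal M_y=c$ is meaningful. Neither affects the validity of the argument.
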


\begin{proof} The statement was proved for sheaves on $ \pp^3 $
in (\cite{mdp}, Ch. IV, Proposition 1.2),but the proof works
without changes also for sheaves on $ \pp^r.$
\end{proof}

Moreover, in the proof, the authors proved also the existence of
maximal dissoci\'{e} subsheaves of a sheaf $ \mathcal{N}.$

As explained in (\cite{mdp}, Ch.IV, Remark 1.3(c)), in $ \pp^3,$
if $ \mathcal{N} $ is a rank $ n+1 $ vector bundle, and $
\mathcal{M} $ is a rank $ n $ dissoci\'{e} maximal subsheaf of $
\mathcal{N},$ then $ \mathcal{N}/\mathcal{M} $ is a rank $ 1 $
torsion--free sheaf, and so it is an ideal sheaf tensorized times
$ \det(\mathcal{N}) \otimes \det(\mathcal{M}^{-1}).$ Moreover, if
$ \mathcal N $ is not dissoci\'{e}, then the ideal sheaf defines a
curve.

%%%%%%%%%%%%%%%%%%%%%%%%%%%%%%%%%%%%%%%%%%%%%%%%%%%%%%%%%%%%%%%%%%%%

\section{A construction of ACM schemes}

In this section, we consider two coherent torsion--free sheaves $
\mathcal{P} $ and $ \mathcal{N} $ and an injective map $ \gamma:
\mathcal{P} \to \mathcal{N},$ and we study the scheme $ D $ whose
ideal sheaf is isomorphic to $ \coker(\gamma),$ as in \cite{mdp}.
We limit ourselves to consider only the case $ D $ has the largest
codimension to have a non--split exact sequence (\ref{pnd}) (see
Corollary \ref{split}) and $ \mathcal N $ to have the largest
projective dimension to allow $ D $ to be an ACM scheme (see
Corollary \ref{acm and pd}). In more detail, we collect the
hypotheses on $ \mathcal{P} $ and $ \mathcal{N} $ in the following

\begin{equation}
\label{H-hyp} \tag{H}
\begin{split} & (H.1) \ \mathcal{P} \mbox{ is torsion-free and }
s:= \pd(\mathcal P) + 2 \le r;\\ & (H.2) \ \mathcal{N} \mbox{
is torsion-free and } \pd(\mathcal N) \leq \pd(\mathcal P) +1 ; \\
& (H.3) \ \mbox{the polynomial }
\\ & \qquad \quad p(t) :=-\chi(\mathcal N(t-k)) + \chi(\mathcal P(t-k)) +
\binom{t+r}r
\\ & \qquad \mbox{has degree } r-s
\mbox{ for some } k \in \zz. \end{split}
\end{equation}

We remark that, in view of Definition \ref{min res} and Remark
\ref{rem-pd}, the condition about the projective dimensions
required in (H.2) and (H.3) means that $ P := H^0_*(\mathcal P) $
and  $ N := H^0_*(\mathcal N) $ have, respectively, minimal free
resolutions

$$ \ 0 \to G_{s-1}
\stackrel{\Delta_{s-1}}{\longrightarrow} G_{s-2}
\stackrel{\Delta_{s-2}}{\longrightarrow} \dots
\stackrel{\Delta_{2}}{\longrightarrow} G_1 \to P \to 0 $$ and

$$
\ 0 \to F_{s} \stackrel{\delta_{s}}{\longrightarrow} F_{s-1}
\stackrel{\delta_{s-1}}{\longrightarrow} \dots
\stackrel{\delta_{2}}{\longrightarrow} F_1 \to N \to 0.$$

\begin{remarks} \label{rem(H)} \rm  (i) We allow $ F_j = 0 $ for some
$ j $ in the minimal free resolution of $ N.$ In such a case, $
F_{j+h} = 0 $ for every $ h \geq 0.$

(ii) Condition (H.3) implies that $ \rk(\mathcal N) = \rk(\mathcal
P) + 1,$ because the rank of $ \mathcal F $  is equal to $ r! $
times the coefficient of $ t^r $ in $ \chi(\mathcal F(t)). $
Moreover, recalling that $ \oo_{\pp^r}(a) $ has degree $ a $ and
that the degree is additive on exact sequences, we have that $ k =
\deg(\mathcal{N}) - \deg(\mathcal{P}).$
\end{remarks}

Now, we describe the geometric properties of the schemes that can
be obtained from such torsion--free sheaves.
\begin{theorem} \label{NtoD} Let $ \mathcal P $ and $ \mathcal N $
be torsion--free coherent sheaves that fulfil the hypotheses
(\ref{H-hyp}). Assume that there exists an injective map $ \gamma:
\mathcal{P} \to \mathcal{N} $ whose image is a maximal subsheaf of
$ \mathcal{N}$. Then there exists a codimension $ s = 2 +
\pd(\mathcal P) $ scheme $ D,$ closed and ACM, whose ideal sheaf
fits into the short exact sequence (\ref{pnd}) with $ k =
\deg(\mathcal N)-\deg(\mathcal P)$. Moreover, the sequence $0 \to
P \to N \to I_D(k) \to 0$ is exact.
\end{theorem}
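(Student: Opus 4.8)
The plan is to study the cokernel $\mathcal{C} := \coker(\gamma)$ and prove it is a twist of an ideal sheaf. Since $\gamma(\mathcal{P})$ is a maximal subsheaf of $\mathcal{N}$, Proposition \ref{maximal-sub-sheaf} ($(1)\Rightarrow(2)$) shows $\mathcal{C}$ is torsion--free, and Remarks \ref{rem(H)}(ii) gives $\rk(\mathcal{C}) = \rk(\mathcal{N}) - \rk(\mathcal{P}) = 1$. The first key step is to identify a rank $1$ torsion--free sheaf on $\pp^r$ with a twisted ideal sheaf. Because $\pp^r$ is regular, its local rings at codimension $1$ points are DVRs, over which finitely generated torsion--free modules are free; hence $\mathcal{C}$ is locally free, in particular reflexive, in codimension $1$, so the canonical injection $\mathcal{C} \to \mathcal{C}^{\vee\vee}$ is an isomorphism there. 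As $\mathcal{C}^{\vee\vee}$ is a rank $1$ reflexive sheaf on the locally factorial space $\pp^r$, it is a line bundle $\oo_{\pp^r}(m)$, and $\mathcal{C}(-m) \hookrightarrow \oo_{\pp^r}$ has cokernel supported in codimension $\ge 2$. Thus $\mathcal{C} \cong \ii_D(m)$ for a closed subscheme $D$ of codimension $\ge 2$.

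Next I would pin down the twist and the codimension. Since $\mathcal{C}$ and $\mathcal{C}^{\vee\vee} = \oo_{\pp^r}(m)$ agree in codimension $1$, they have the same degree, so $\deg(\mathcal{C}) = m$; on the other hand additivity of degree on (\ref{pnd}) together with Remarks \ref{rem(H)}(ii) gives $\deg(\mathcal{C}) = \deg(\mathcal{N}) - \deg(\mathcal{P}) = k$. Hence $m = k$, so $\mathcal{C} \cong \ii_D(k)$ and we obtain the short exact sequence (\ref{pnd}). Feeding this back into (H.3), the sequence yields $\chi(\mathcal{N}(t-k)) - \chi(\mathcal{P}(t-k)) = \chi(\ii_D(t))$, whence
$$ p(t) = \binom{t+r}{r} - \chi(\ii_D(t)) = \chi(\oo_D(t)) $$
is exactly the Hilbert polynomial of $D$. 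Since its degree equals $\dim D$, condition (H.3) forces $\dim D = r-s$, i.e. $D$ has codimension $s = 2 + \pd(\mathcal{P})$.

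To prove $D$ is ACM I would use the long exact cohomology sequence of (\ref{pnd}) and Lemma \ref{lemma1}(4). By (H.2) we have $\pd(\mathcal N) \le s-1$, so $H^i_*(\mathcal{N}) = 0$ for $1 \le i \le r - \pd(\mathcal N) - 1$, and in particular for $1 \le i \le r-s$; moreover $H^{i+1}_*(\mathcal{P}) = 0$ for $1 \le i+1 \le r - \pd(\mathcal{P}) - 1 = r-s+1$, hence for all $1 \le i \le r-s$. The portion $H^i_*(\mathcal{N}) \to H^i_*(\ii_D(k)) \to H^{i+1}_*(\mathcal{P})$ then gives $H^i_*(\ii_D) = 0$ for $1 \le i \le r-s = \dim D$, which is precisely the characterization of ACM schemes recalled in Section 2.

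Finally, the module statement follows by applying $H^0_*$ to (\ref{pnd}) and using $P = H^0_*(\mathcal{P})$, $N = H^0_*(\mathcal{N})$ and $I_D = H^0_*(\ii_D)$: one gets an exact sequence ending in $I_D(k) \to H^1_*(\mathcal{P})$, and since $s \le r$ by (H.1), Lemma \ref{lemma1}(4) gives $H^1_*(\mathcal{P}) = 0$, so $0 \to P \to N \to I_D(k) \to 0$ is exact. The main obstacle I anticipate is the first step: showing that the rank $1$ torsion--free cokernel is genuinely a twisted ideal sheaf and that the twist is forced to equal $k$. Once the reflexive--hull argument and the degree bookkeeping are in place, the codimension and the ACM property fall out cleanly from (H.3) and the cohomological Lemma \ref{lemma1}.
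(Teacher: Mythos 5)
Your proposal is correct and follows essentially the same route as the paper: identify the rank--one torsion--free cokernel with a twisted ideal sheaf via its reflexive hull, read off the twist and the codimension from degree additivity and (H.3), and deduce the ACM property and the module sequence from the cohomology of (\ref{pnd}) via Lemma \ref{lemma1}. The only difference is cosmetic: where you inline the double--dual and cohomology arguments, the paper cites Hartshorne's results on reflexive sheaves and its own Proposition \ref{pd(N) geq pd(P)+2}.
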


\begin{proof} The cokernel of $ \gamma $ is a rank $ 1 $ torsion--free
sheaf $ \mathcal{F}.$ Let $ \mathcal{F}^{\vee\vee} $ be its double
dual. Since $ \mathcal{F} $ is torsion--free, the natural map $
\mathcal{F} \to \mathcal{F}^{\vee\vee} $ is injective. By
(\cite{hart-2}, Corollary 1.2 and Proposition 1.9), $
\mathcal{F}^{\vee\vee} \cong \oo_{\pp^r}(h) $ for some $ h \in
\zz,$ and so $ \mathcal{F} \cong \ii_D(h) \subseteq
\oo_{\pp^r}(h),$ i.e. we have an exact sequence

$$ 0 \to \mathcal{P} \stackrel{\gamma}{\longrightarrow} \mathcal{N}
\to \ii_{D}(h) \to 0.$$

Clearly $ h = \deg(\mathcal N)-\deg(\mathcal P) = k $, and hence
the above sequence coincides with (\ref{pnd}). Now, by Remark
\ref{rem(H)}, (ii), $k$ is the integer occurring in the polynomial
p(t) of (H.3), then $ p(t) $ is the Hilbert polynomial of $D$,
whence $\dim(D) = r-s $ by (H.3). Moreover, (H.2) and the second
part of Proposition \ref{pd(N) geq pd(P)+2} imply that $D$ is ACM.
Finally, by (H1) we have   $\pd(\mathcal P) \le r-2$, whence $r-
\pd(\mathcal P)- 1 \ge 1$. Then Lemma \ref{lemma1}(4) implies that
$H^1_*(\mathcal P)= 0$ and the last statement follows.
\end{proof}

\begin{remark} \label{rem3.3} \rm The map $ \gamma: \mathcal{P} \to \mathcal{N} $
induces a map of complexes between the minimal free resolutions of
$ P $ and $ N.$ Let $ \gamma_i : G_i \to F_i $ be the induced map.
Of course, $ \gamma_i \circ \Delta_{i+1} = \delta_{i+1} \circ
\gamma_{i+1},$ for each $ i \geq 1.$ Hence, a resolution of $
I_D(k) $ can be obtained via mapping cone from (\ref{PNI}), and it
is
\begin{equation}
\label{non-min-res-D} 0 \to \begin{array}{c} G_{s-1} \\
\oplus \\ F_s \end{array}
\stackrel{\varepsilon_s}{\longrightarrow} \begin{array}{c} G_{s-2}
\\ \oplus \\ F_{s-1} \end{array}
\stackrel{\varepsilon_{s-1}}{\longrightarrow} \dots
\stackrel{\varepsilon_3}{\longrightarrow} \begin{array}{c} G_1 \\
\oplus \\ F_2 \end{array}
\stackrel{\varepsilon_2}{\longrightarrow} F_1 \to I_{D}(k) \to 0
\end{equation} where $ \varepsilon_i: G_{i-1} \oplus
F_i \to G_{i-2} \oplus F_{i-1} $ is given by
$$ \left( \begin{array}{cc} \Delta_{i-1} & 0 \\ (-1)^i
\gamma_{i-1} & \delta_i \end{array} \right), \mbox{ for } i \geq
2. $$ We remark that $\varepsilon_2 : G_{1} \oplus F_2 \to F_1 $
is represented by the matrix $ ( \gamma_1, \delta_2).$

For general results on free resolutions, it is clear that the
minimal free resolution of $ I_D(k) $ can be obtained by
cancelling the free modules corresponding to constant non--zero
entries of any matrix representing the map $ \varepsilon_i, i = 2,
\dots, s.$
\end{remark}

\begin{remark} \rm If there exists an injective map $ \gamma: \mathcal{P}
\to \mathcal{N} $ whose image is a maximal subsheaf of $
\mathcal{N} $ of rank $ \rk(\mathcal{P}) = \rk(\mathcal{N}) - 1,$
then the general map in $ \Hom(\mathcal{P}, \mathcal{N}) $ has the
same property.
\end{remark}

Once we have constructed a closed ACM scheme $ D $ of codimension
$ s $ as cokernel of a short exact sequence (\ref{pnd}), we can
construct the minimal free resolution of $ I_D $ and it is $$ 0
\to H_s \stackrel{\sigma_s}{\longrightarrow} \dots
\stackrel{\sigma_2}{\longrightarrow} H_1
\stackrel{\sigma_1}{\longrightarrow} I_D \to 0 $$ where $ H_i =
\oplus_{n \in \zz} R(-n)^{h_i(n)}.$ Let $ K = \ker(\sigma_1).$
Then, the ideal sheaf $ \ii_D $ is also the cokernel of the short
exact sequence
\begin{equation} \label {K to H} 0 \to \mathcal{K} \stackrel{j}{\longrightarrow}
\mathcal{H}_1 \stackrel{\sigma_1}{\longrightarrow} \ii_D \to
0.\end{equation}

Now we compare the two sequences (\ref{pnd}) and (\ref{K to H}).

\begin{proposition} \label{DtoN} Let $D \subseteq \pp^r$ be
an ACM  scheme of codimension $s$ and let
 {\rm (\ref{K to H})} be as above.

 (i) If there is a
 sequence {\rm (\ref{pnd})} with $\pd(\mathcal P) = s-2$ then there
exists a map $ \psi: \mathcal K \to \mathcal P $ such that $
\mathcal N $ is the push--out of $ \mathcal P $ and $ \mathcal
H_1.$

(ii) Conversely, let $
\mathcal P $ be a torsion--free coherent sheaf with $ \pd(\mathcal
P) = s-2.$ Then, for every map $ \psi: \mathcal K \to \mathcal P $
there exists a short exact sequence (\ref{pnd}) whose third item
is $ \ii_D.$
\end{proposition}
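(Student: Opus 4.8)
The plan is to handle both directions through the universal property of the push--out, the one genuinely geometric input being a vanishing of $\ext^1$ made available by the hypothesis $\pd(\mathcal P)=s-2$. In part (i) I will first twist (\ref{K to H}) by $k$ so that both it and (\ref{pnd}) have cokernel $\ii_D(k)$; this is harmless bookkeeping, and I write $\mathcal K,\mathcal H_1,\sigma_1,j$ for the twisted data.

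For (i) the idea is to lift the surjection $\sigma_1\colon\mathcal H_1\to\ii_D(k)$ through the surjection $\mathcal N\to\ii_D(k)$ of (\ref{pnd}). Applying $\Hom(\mathcal H_1,-)$ to (\ref{pnd}) gives the exact sequence
$$\Hom(\mathcal H_1,\mathcal N)\to\Hom(\mathcal H_1,\ii_D(k))\stackrel{\partial}{\longrightarrow}\ext^1(\mathcal H_1,\mathcal P),$$
so $\sigma_1$ lifts to some $\tilde\sigma\colon\mathcal H_1\to\mathcal N$ once $\ext^1(\mathcal H_1,\mathcal P)=0$. As $\mathcal H_1=\oplus_i\oo_{\pp^r}(a_i)$ is dissoci\'{e}, this group is a finite sum of the $H^1(\mathcal P(-a_i))$, and $H^1_*(\mathcal P)=0$ because $\pd(\mathcal P)=s-2\le r-2$ places $\mathcal P$ in the vanishing range of Lemma \ref{lemma1}(4). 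Given $\tilde\sigma$, the identity $\sigma_1\circ j=0$ forces $\tilde\sigma\circ j$ to factor through $\gamma$, and I define $\psi\colon\mathcal K\to\mathcal P$ by $\gamma\circ\psi=\tilde\sigma\circ j$. Now $\gamma$ and $\tilde\sigma$ agree on $\mathcal K$, so the push--out $\mathcal Q$ of $\mathcal P\stackrel{\psi}{\leftarrow}\mathcal K\stackrel{j}{\longrightarrow}\mathcal H_1$ maps to $\mathcal N$; since $j$ is injective, $\mathcal Q$ sits in an exact sequence $0\to\mathcal P\to\mathcal Q\to\coker(j)=\ii_D(k)\to0$, and $\mathcal Q\to\mathcal N$ is the identity on $\mathcal P$ and on $\ii_D(k)$, hence an isomorphism by the Five Lemma. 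Therefore $\mathcal N$ is the push--out, as asserted.

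For (ii) no lifting is needed: given an arbitrary $\psi\colon\mathcal K\to\mathcal P$, I would set $\mathcal N:=\coker\bigl(\mathcal K\stackrel{(j,-\psi)}{\longrightarrow}\mathcal H_1\oplus\mathcal P\bigr)$, the push--out of $j$ and $\psi$. Because $j$ is injective the combined map is injective, so $\mathcal P\to\mathcal N$ is injective with cokernel $\coker(j)=\ii_D$; this is already a short exact sequence of the form (\ref{pnd}), and $\mathcal N$ is torsion--free by the standard argument recorded immediately after (\ref{pnd}).

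The step I expect to be the main obstacle is the lifting in (i): once $\tilde\sigma$ exists everything downstream is formal diagram chasing, but its existence rests entirely on the vanishing $\ext^1(\mathcal H_1,\mathcal P)=0$. This works because of two facts acting together --- that $\mathcal H_1$ is dissoci\'{e}, so the relevant global $\ext$ collapses to a sum of groups $H^1(\mathcal P(-a_i))$, and that the hypothesis $\pd(\mathcal P)=s-2$ is exactly the range in which Lemma \ref{lemma1} gives $H^1_*(\mathcal P)=0$. The only genuine nuisance is keeping the twist by $k$ matched between the two sequences.
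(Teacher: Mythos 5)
Your proof is correct and follows the paper's own argument in all essentials: both directions rest on the vanishing $H^1_*(\mathcal P)=0$ supplied by Lemma \ref{lemma1} for a torsion--free sheaf with $\pd(\mathcal P)=s-2\le r-2$, used first to lift $\mathcal H_1\to\ii_D(k)$ through $\mathcal N\to\ii_D(k)$, then to induce $\psi$ on the kernels and identify $\mathcal N$ with the push--out (and, for (ii), to build $\mathcal N$ as $\coker(j,-\psi)$). The paper packages the lifting at the module level (surjectivity of $N\to I_D$ together with freeness of $H_1$) rather than as $\ext^1(\mathcal H_1,\mathcal P)=0$, but this is the same fact in different clothing.
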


\begin{proof} (i) Up to twisting the sequence (\ref{pnd}), we can assume
that $ k = 0.$ The minimal free resolution of $ I_D $ is
$$ 0 \to H_s \stackrel{\sigma_s}{\longrightarrow} \dots
\stackrel{\sigma_2}{\longrightarrow} H_1
\stackrel{\sigma_1}{\longrightarrow} I_D \to 0,$$ and so $
\sigma_1 $ maps the canonical bases of $ H_1 $ onto a minimal set
of generators of $ I_D.$ The surjective map $ \mathcal N \to \ii_D
$ induces a surjective map $ N = H^0_*(\mathcal N) \to I_D $
because $ \pd(\mathcal P) = s-2 $ implies that $ H^1_*(\mathcal P)
= 0 $ (see Lemma \ref{lemma1}). Hence, we have a well defines map
$ H_1 \to N $ given on the canonical bases of $ H_1 $ and extended
by linearity. So, there exists a map $ \varphi: \mathcal{H}_1 \to
\mathcal N.$ It is straightforward to check that $ \varphi $ maps
the kernel of $ \sigma_1 $ to the image of $ \mathcal P,$ and so $
\varphi $ induces a map $ \psi: \mathcal K \to \mathcal{P}.$ At
the end, there exists a commutative diagram
$$ \commdiag{0 \to &
\mathcal{K} & \mapright\lft{j} & \mathcal{H}_1 & \mapright & \ii_D
& \to 0 \cr & \mapdown\lft{\psi} & & \mapdown\lft{\varphi} & &
\bivline & \cr 0 \to & \mathcal{P} & \mapright & \mathcal{N} &
\mapright & \ii_D & \to 0 } $$ where the last map is the identity
of $ \ii_D.$ From the universal property of the push--out (see
\cite{north}, Ch. 3, Theorem 11 for the definition and the
properties of the push--out), it follows that $ \mathcal N $ is
the push--out of $ \mathcal H_1 $ and $ \mathcal P $ as claimed.

(ii) As soon as we fix a
map $ \psi: \mathcal K \to \mathcal P,$ we can construct the same
commutative diagram we considered in the first part of the proof.
In more detail, let $ q: \mathcal{K} \to \mathcal{H}_1 \oplus
\mathcal{P} $ be defined as $ j $ on the first summand and as $
-\psi $ on the second one. Then, $ \mathcal{N} = \mathcal{H}_1
\oplus \mathcal{P} / \mbox{im}(q).$ The sheaf $ \mathcal{N} $ is
torsion--free of rank $ \rk(\mathcal{P}) + 1.$ The second row of
commutative diagram above gives the short exact sequence $ 0 \to P
\to N \to I_D \to 0 $ because $ H^1_*(\mathcal P) = 0 $ by Lemma
\ref{lemma1}. Hence, $ \pd(\mathcal N) \leq \pd(\ii_D) =
\pd(\mathcal P) + 1,$ and the proof is complete.
\end{proof}

\begin{remark} \rm If $ \psi = 0,$ then $ \mathcal N = \mathcal P \oplus \ii_D,$
and the sequence is not interesting. On the other hand, if $ \psi
$ is an isomorphism, then $ \mathcal N \cong \mathcal H_1 $ and
once again we get nothing new.
\end{remark}

Summarizing the above discussed results, we have that if we start
from two sheaves $ \mathcal N $ and $ \mathcal P $ satisfying our
hypotheses, we can construct codimension $ s $ ACM schemes, and
conversely, given a codimension $ s $ ACM scheme $ D $ and a
torsion--free sheaf $ \mathcal P,$ we can construct a sheaf $
\mathcal{N} $ fulfilling the conditions we ask.

Starting from two given torsion--free sheaves $ \mathcal N $ and $
\mathcal P,$ there are constrains on the ACM schemes we can
obtain.
\begin{proposition} \label{constrains} In the same hypotheses as
Theorem \ref{NtoD}, let $ D \subset \pp^r $ be a codimension $ s $
ACM closed scheme whose ideal sheaf fits into a short exact
sequence
$$ 0 \to \mathcal{P} \to \mathcal{N} \to \ii_D(k) \to 0 $$ for
some $ k \in \zz.$ Then, the minimal number of generators of $ I_D
$ is not larger than $ \rk(\mathcal{F}_1) $ while the free modules
$ H_i $ that appear in the minimal free resolution of $ I_D(k) $
are direct summands of $ F_i \oplus G_{i-1}.$
\end{proposition}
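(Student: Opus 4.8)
The plan is to read off the conclusion from the mapping cone resolution of $ I_D(k) $ already constructed in Remark \ref{rem3.3}, by comparing it with the minimal free resolution via the standard structure theorem for free resolutions over $ R.$

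First I would recall that, under the hypotheses of Theorem \ref{NtoD}, the injective map $ \gamma $ induces a morphism of complexes between the minimal free resolutions of $ P $ and $ N,$ and that the associated mapping cone yields the (in general non--minimal) free resolution (\ref{non-min-res-D}) of $ I_D(k).$ In that resolution the term in homological position $ i $ is $ F_i \oplus G_{i-1} $ for $ i \ge 2,$ and is $ F_1 $ in position $ 1;$ adopting the convention $ G_0 = 0,$ the term in position $ i $ is $ F_i \oplus G_{i-1} $ for every $ i \ge 1.$ This is an honest free resolution of $ I_D(k),$ even though it need not be minimal.

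Next I would invoke the fact that over the graded polynomial ring every free resolution of a finitely generated graded module is isomorphic to the direct sum of its minimal free resolution and a trivial complex, i.e. a direct sum of split pieces of the form $ 0 \to R(-a) \stackrel{\cong}{\longrightarrow} R(-a) \to 0 $ concentrated in two consecutive positions (see, e.g., \cite{ei}). This is exactly the algebraic content of the cancellation of constant nonzero entries described at the end of Remark \ref{rem3.3}: each such entry corresponds to a trivial summand, and removing all of them produces the minimal free resolution $ 0 \to H_s \to \dots \to H_1 \to I_D(k) \to 0.$ Since every map in sight is homogeneous of degree $ 0 $ and each cancellation is a graded change of basis, the comparison respects the grading, and one concludes that $ H_i $ is a graded direct summand of the term in position $ i $ of (\ref{non-min-res-D}); that is, $ H_i $ is a direct summand of $ F_i \oplus G_{i-1},$ which is the second assertion.

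Finally, the bound on the number of generators follows by specializing to $ i = 1.$ As $ G_0 = 0,$ the term in position $ 1 $ of (\ref{non-min-res-D}) is $ F_1,$ so $ H_1 $ is a direct summand of $ F_1 $ and hence $ \rk(H_1) \le \rk(F_1) = \rk(\mathcal F_1).$ Because $ H_1 \to I_D(k) $ is the first map of a minimal free resolution, $ \rk(H_1) $ equals the minimal number of generators of $ I_D(k),$ which is unchanged by the twist by $ k $ and therefore equals the minimal number of generators of $ I_D.$ The only point requiring care is the correct invocation of the structure theorem in the graded category, ensuring that the summands $ H_i $ of $ F_i \oplus G_{i-1} $ are extracted homogeneously; granting this, both statements are immediate.
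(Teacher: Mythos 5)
Your proof is correct and follows exactly the route the paper takes: the paper's own argument simply cites the mapping cone resolution (\ref{non-min-res-D}) from Remark \ref{rem3.3} and says the minimal free resolution is obtained by cancelling suitable summands, which is precisely the graded structure theorem you invoke. Your write-up is a more detailed version of the same two-line proof, with the indexing and the $i=1$ specialization handled correctly.
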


\begin{proof} We constructed a free resolution of $ I_D(k) $ in Remark
\ref{rem3.3}. The minimal free resolution of $ I_D $ can be
obtained from this last one by cancelling suitable summands.
\end{proof}

As a consequence of the hypotheses (\ref{H-hyp}), to construct ACM
schemes of codimension $ s \geq 3,$ we have to consider a
torsion--free sheaf $ \mathcal P $ satisfying $ \pd(\mathcal P)
> 0,$ that is to say, $ \mathcal P $ non--dissoci\'{e}. On the other hand, if the
codimension of $ D $ is $ 2,$ then $ \mathcal P $ is dissoci\'{e}.
In this case, we have a more geometric interpretation of the
construction, and it can be compared with Serre's construction
(Hartshorne's one, respectively)  when $ \mathcal N $ is a rank $
2 $ vector bundle (reflexive sheaf, respectively).
\begin{proposition} \label{cod2} Let $ D \subset \pp^r $ be a codimension $ 2 $
ACM closed scheme, and let $ c $ be an integer such that $ H^0(D,
\omega_D(c)) \not= 0.$ Then, for every non--zero $ \xi \in H^0(D,
\omega_D(c)) $ we can construct a short non-split exact sequence
$$ 0 \to \oo_{\pp^r}(c-r-1) \to \mathcal{N} \to \ii_D \to 0 $$
with $ \mathcal{N} $ torsion--free, of rank $ 2 $ and $
\pd(\mathcal N) \leq 1.$
\end{proposition}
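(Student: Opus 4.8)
The plan is to realize the desired sequence as an extension governed by a suitable $\ext^1$ group, and then to match that group with $H^0(D,\omega_D(c))$ via a Serre-type duality computation. Set $\mathcal P:=\oo_{\pp^r}(c-r-1)$, which is dissoci\'e, so that $\pd(\mathcal P)=0$ and the sought sequence is precisely an instance of (\ref{pnd}) with $k=0$ and $s=2$. Extensions $0\to\mathcal P\to\mathcal N\to\ii_D\to 0$ are classified by $\ext^1(\ii_D,\mathcal P)$, the zero class corresponding to the split extension. Hence it suffices to produce, for each nonzero $\xi$, a nonzero class $e\in\ext^1(\ii_D,\mathcal P)$ attached to $\xi$, and then to read off the properties of the middle term of the associated extension.

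First I would identify $\ext^1(\ii_D,\mathcal P)$ with $H^0(D,\omega_D(c))$. Applying $\Hom(-,\mathcal P)$ to the structure sequence $0\to\ii_D\to\oo_{\pp^r}\to\oo_D\to 0$ yields
$$\ext^1(\oo_{\pp^r},\mathcal P)\to \ext^1(\ii_D,\mathcal P)\to \ext^2(\oo_D,\mathcal P)\to \ext^2(\oo_{\pp^r},\mathcal P).$$
Since $D$ is ACM, hence Cohen--Macaulay of codimension $2$, the sheaf $\mathcal{E}xt^q(\oo_D,\omega_{\pp^r})$ vanishes for $q\ne 2$ and equals $\omega_D$ for $q=2$; twisting by $\oo_{\pp^r}(c)$ (note $\mathcal P=\omega_{\pp^r}(c)$) and running the local-to-global spectral sequence, which degenerates because only $q=2$ survives, gives $\ext^2(\oo_D,\mathcal P)\cong H^0(D,\omega_D(c))$. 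As $\ext^i(\oo_{\pp^r},\mathcal P)=H^i(\pp^r,\oo_{\pp^r}(c-r-1))$ vanishes for $i=1$, the displayed sequence exhibits $\ext^1(\ii_D,\mathcal P)$ as a submodule of $H^0(D,\omega_D(c))$, whose cokernel injects into $H^2(\pp^r,\oo_{\pp^r}(c-r-1))$.

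Granting surjectivity onto the chosen $\xi$, I would then fix $\xi\neq 0$, lift it to a nonzero class $e\in\ext^1(\ii_D,\mathcal P)$, and let $0\to\mathcal P\to\mathcal N\to\ii_D\to 0$ be the corresponding extension. Non-splitness is immediate from $e\neq 0$. The middle term $\mathcal N$ is torsion--free, since a torsion section maps to a torsion section of $\ii_D$, necessarily zero, hence lies in the torsion--free sheaf $\mathcal P$ and so vanishes. Additivity of rank on the sequence gives $\rk(\mathcal N)=\rk(\mathcal P)+\rk(\ii_D)=2$. Finally, applying Corollary \ref{acm and pd} to this non-split instance of (\ref{pnd}) with $D$ ACM of codimension $2$ and $\pd(\mathcal P)=0$ yields $\pd(\mathcal N)\le\pd(\mathcal P)+1=1$, completing the verification.

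The main obstacle is the surjectivity of $\ext^1(\ii_D,\mathcal P)\to H^0(D,\omega_D(c))$, i.e. ensuring that the prescribed $\xi$ genuinely lifts to an extension class; this is controlled by the boundary term $H^2(\pp^r,\oo_{\pp^r}(c-r-1))$, which vanishes as soon as $r\ge 3$ and then makes the map an isomorphism. The only other delicate point is the precise duality $\mathcal{E}xt^2(\oo_D,\omega_{\pp^r})\cong\omega_D$ for the ACM scheme $D$, which is exactly where the Cohen--Macaulay hypothesis is used in an essential way.
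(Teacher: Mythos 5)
Your argument is correct, and it reaches the same two milestones as the paper --- the identification $\ext^1(\ii_D,\oo_{\pp^r}(c-r-1))\cong H^0(D,\omega_D(c))$ and the bound $\pd(\mathcal N)\le 1$ --- but by noticeably different routes. For the identification, the paper applies global Serre duality on $\pp^r$ to get $H^{r-1}(\pp^r,\ii_D(-c))'$, passes to $H^{r-2}(D,\oo_D(-c))'$ via the structure sequence, and dualizes again on $D$; you instead apply $\Hom(-,\omega_{\pp^r}(c))$ to the structure sequence and use the local-to-global spectral sequence together with $\mathcal{E}xt^q(\oo_D,\omega_{\pp^r})=0$ for $q\ne 2$ and $=\omega_D$ for $q=2$. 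Both computations silently need $r\ge 3$ (the paper's via the vanishing of $H^{r-2}$ and $H^{r-1}$ of $\oo_{\pp^r}$, yours via $H^1=H^2=0$), so you are no worse off than the paper on that point, and you are more explicit about where the hypothesis enters. The more substantive divergence is the projective-dimension bound: the paper constructs $\mathcal N$ explicitly as the push-out of $\mathcal H_1$ and $\oo_{\pp^r}(c-r-1)$ along a map $\psi:\mathcal H_2\to\oo_{\pp^r}(c-r-1)$ representing $\xi$, which produces the concrete dissoci\'e resolution $0\to\mathcal H_2\to\mathcal H_1\oplus\oo_{\pp^r}(c-r-1)\to\mathcal N\to 0$; you simply invoke Corollary \ref{acm and pd} for the non-split sequence with $\pd(\mathcal P)=0$. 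Your shortcut is legitimate (you correctly establish torsion-freeness of $\mathcal N$ first, so the corollary applies) and shorter, but it buys less: the paper's explicit resolution is what powers the subsequent Remark (that $\pd(\mathcal N)=0$ iff $D$ is a complete intersection) and the Proposition relating $\xi'=f\xi$ to a push-out of twisted sheaves, so the constructive detour is not redundant in context.
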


\begin{proof} By Serre's duality for $ \pp^r $ (\cite{hart}, Ch. III,
Theorem 7.1), we get $ \ext^1(\ii_D, \oo_{\pp^r}(c-r-1)) \cong
H^{r-1}(\pp^r, \ii_D(-c))'.$ From the inclusion $ D
\hookrightarrow \pp^r,$ we get $ H^{r-1}(\pp^r, \ii_D(-c))' \cong
H^{r-2}(D, \oo_D(-c))',$ and again by Serre's duality on $ D $
(\cite{hart}, Ch. III, Theorem 7.6 and Proposition 6.3(c)), we
have the further isomorphisms $ \ext^1(\ii_D, \oo_{\pp^r}(c-r-1))
\cong \Hom(\oo_D(-c), \omega_D) \cong H^0(D, \omega_D(c)).$ Hence,
every non--zero $ \xi \in H^0(D, \omega_D(c)) $ can be thought to
as an extension of $ \ii_D $ with $ \oo_{\pp^r}(c-r-1) $ and so as
a non--split short exact sequence $$ 0 \to \oo_{\pp^r}(c-r-1) \to
\mathcal{N} \to \ii_D \to 0.$$ The sheaf $ \mathcal{N} $ has rank
$ 2,$ and it is torsion--free. Moreover, if $$ 0 \to \mathcal{H}_2
\stackrel{\varphi}{\longrightarrow} \mathcal{H}_1 \to \ii_D \to 0
$$ is the minimal dissoci\'{e} resolution of $ \ii_D,$ there is a
natural surjection $ \Hom(\mathcal{H}_2, \oo_{\pp^r}(c-r-1)) \to
\ext^1(\ii_D, \oo_{\pp^r}(c-r-1)),$ and so there exists a map $
\psi: \mathcal{H}_2 \to \oo_{\pp^r}(c-r-1) $ that does not factor
through $ \varphi: \mathcal{H}_2 \to \mathcal{H}_1 $ whose image
in $ \ext^1(\ii_D, \oo_{\pp^r}(c-r-1)) $ is equal to $ \xi.$ By
using standard results from homological algebra, we get that $
\mathcal{N} $ is the push--out of $ \mathcal{H}_1 $ and $
\oo_{\pp^r}(c-r-1) $ via $ (\varphi, -\psi).$ Hence, the
resolution of $ \mathcal{N} $ with dissoci\'{e} sheaves is $$ 0
\to \mathcal{H}_2 \stackrel{(\varphi,-\psi)}{\longrightarrow}
\mathcal{H}_1 \oplus \oo_{\pp^r}(c-r-1) \to \mathcal{N} \to 0 $$
and so $ \mathcal{N} $ has projective dimension less than or equal
to $ 1.$
\end{proof}

\begin{remark} \rm From the proof of the previous Proposition,
we get that $ \pd(\mathcal N)=0 $ if and only if $ \mathcal{H}_2 =
\oo_{\pp^r}(c-r-1), $ i.e. $ D $ is a complete intersection
scheme.
\end{remark}

\begin{remark} \rm We can easily modify the proof to get sheaves $ \mathcal{N} $
of larger rank: it is enough to consider $ c_1, \dots, c_n \in \zz
$ such that $ H^0(D, \omega_D(c_i)) \not= 0 $ for at least a $
c_i.$ As in the proof of the previous Proposition, $
\oplus_{i=1}^n H^0(D, \omega_D(c_i)) \cong \ext^1(\ii_D,
\oplus_{i=1}^n \oo_{\pp^r}(c_i-r-1)) $ and so a non--zero element
$ \xi \in \oplus_{i=1}^n H^0(D, \omega_D(c_i)) $ can be considered
as an extension of $ \ii_D $ with $ \mathcal P = \oplus_{i=1}^n
\oo_{\pp^r}(c_i-r-1),$ and we can construct $ \mathcal N $ as in
the proof.
\end{remark}

\begin{remark} \rm In comparing Proposition \ref{cod2} with Serre's and Hartshorne's
constructions mentioned above, it is evident that the hypothesis
on $ \mathcal N $ strongly affects the properties of the
constructed scheme. For example, when $ \mathcal N $ is a rank $ 2
$ reflexive sheaf, as in Hartshorne' s setting, the associated
schemes are generically locally complete intersection. In fact,
the locus where the reflexive sheaf $ \mathcal N $ is not locally
free has codimension $ \geq 3 $ (\cite{hart-2}, Corollary 1.4 and
Theorem 4.1 for the case of curves in $ \pp^3 $). The properties
of the associated schemes show that the constructions are not the
same one. In fact, following Proposition \ref{cod2}, it is
possible to construct ACM schemes which are locally complete
intersection at no point, while if $ \mathcal N $ is reflexive and
$ D $ is the associated scheme, the locus of the points of $ D $
where $ D $ is not locally complete intersection has codimension $
\geq 1 $ in $ D.$ On the other hand, all the schemes constructed
via Proposition \ref{cod2} are ACM, while the ones associated to
reflexive sheaves can have non--zero cohomology.
\end{remark}

Now, we show how to construct ACM codimension 2 schemes which
contain the first infinitesimal neighborhood of another ACM
codimension 2 scheme. They are candidates to have no points at
which the scheme is locally complete intersection.

\begin{proposition} Let $ Y $ be an ACM codimension 2 scheme
and let  $ \mathcal{N} = \ii_Y \oplus \ii_Y.$ Then, every
codimension $ 2 $ ACM scheme $ D $ we obtain from the construction
above contains the first infinitesimal neighborhood of $ Y.$

Moreover, $D$ is not locally complete intersection at any point of
$Y$.In particular it is not generically locally complete
intersection.
\end{proposition}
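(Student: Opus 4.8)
The plan is to make the construction completely explicit in this codimension $2$ situation and then read off both assertions from a local computation at the points of $Y$. Since the codimension of $D$ is $2$, hypothesis (H.1) forces $\pd(\mathcal P)=0$, so $\mathcal P$ is dissoci\'e of rank $1$, say $\mathcal P=\oo_{\pp^r}(-e)$, and $\gamma$ is given by a pair of forms $f,g\in I_Y$ of degree $e$ (note that $f,g$ \emph{vanish} on $Y$); by Theorem \ref{NtoD} the cokernel is $\ii_D(e)$ with $k=e$. First I would identify this cokernel. Consider the sheaf map $\phi:\ii_Y\oplus\ii_Y\to\oo_{\pp^r}(e)$ sending $(u,v)\mapsto gu-fv$. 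Then $\phi\circ\gamma=0$, so $\phi$ factors through $\ii_D(e)$, inducing $\bar\phi:\ii_D(e)\to\oo_{\pp^r}(e)$. Since $\ii_D(e)$ is torsion--free of rank $1$ and $\bar\phi\neq0$, the map $\bar\phi$ is injective; and because $\ii_D$ agrees with $\oo_{\pp^r}$ in codimension $1$ we have $\Hom(\ii_D,\oo_{\pp^r})=H^0(\oo_{\pp^r})=K$, so $\bar\phi$ is a nonzero scalar multiple of the canonical inclusion. Hence $\ii_D(e)$ is exactly the image of $\phi$, and passing to stalks at an arbitrary $x\in\pp^r$ gives $(\ii_D)_x=f\,(\ii_Y)_x+g\,(\ii_Y)_x$.

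The first assertion is then immediate: since $f,g\in(\ii_Y)_x$ for every $x\in Y$, we get $(\ii_D)_x\subseteq(\ii_Y)_x^2$, that is $\ii_D\subseteq\ii_Y^2=\ii_{Y^{(1)}}$, where $Y^{(1)}$ is the first infinitesimal neighborhood of $Y$; equivalently $Y^{(1)}\subseteq D$.

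For the second assertion I would show that $\oo_{D,\eta}$ fails to be Gorenstein at the generic point $\eta$ of any component $Y_0$ of $Y$. Localizing at $\eta$, the ring $B=\oo_{\pp^r,\eta}$ is regular of dimension $2$ with maximal ideal $\mathfrak n$; writing $J=(\ii_Y)_\eta$ and $\mathfrak a=(f,g)B$ we have $\mathfrak a\subseteq J$, both $\mathfrak n$--primary, with $f,g$ a regular sequence, $(\ii_D)_\eta=\mathfrak a J$, and $\oo_{D,\eta}=B/\mathfrak a J$ Artinian. The key point is that the socle of $B/\mathfrak a J$ has dimension at least $2$. Indeed, for $c\in(J:\mathfrak n)$ one has $\mathfrak n(\mathfrak a c)=\mathfrak a(\mathfrak n c)\subseteq\mathfrak a J$, so $fc$ and $gc$ represent socle elements; choosing $c\in(J:\mathfrak n)\setminus J$ (possible since $B/J$ is Artinian and nonzero, hence has nonzero socle) and using that $f,g$ form a regular sequence, one checks that a relation $(\lambda f+\mu g)c\in\mathfrak a J$ with $(\lambda,\mu)\neq(0,0)$ would force $c\in J$, a contradiction; so $fc$ and $gc$ are linearly independent modulo $\mathfrak a J$. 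Thus $\oo_{D,\eta}$ has socle of dimension $\geq2$, hence is not Gorenstein and in particular not a complete intersection. Since the local complete intersection locus is stable under generization and $\eta$ lies in the closure of every point of $Y_0$, it follows that $D$ is not l.c.i. at any point of $Y$. Finally, as $(\ii_D)_x$ has radical $(f,g)_x$, we have $D_{\mathrm{red}}=V(f,g)$, a codimension $2$ complete intersection containing $Y$; hence the generic points of the components of $Y$ are among the generic points of $D$, and the failure of l.c.i. there shows $D$ is not generically l.c.i.

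The main obstacle is the socle computation in the last step: the identification $(\ii_D)_x=f(\ii_Y)_x+g(\ii_Y)_x$ and the containment $Y^{(1)}\subseteq D$ are essentially formal once $\phi$ is written down, but pinning down that $B/\mathfrak a J$ has socle of dimension at least $2$ — and thereby ruling out the complete intersection property — is the crux, and it is precisely there that the hypothesis $f,g\in I_Y$ (so that $\mathfrak a\subseteq J$) is really used.
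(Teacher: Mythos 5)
Your proof is correct, but it takes a genuinely different route from the paper's in both halves. For the containment of the first infinitesimal neighborhood, the paper argues determinantally: it writes the Hilbert--Burch matrix $A$ of $\ii_Y$, notes that $I_D$ is generated by the maximal minors of the block matrix with two diagonal copies of $A$ and one extra column coming from a lifting of $\gamma$, and expands these minors along that column, landing in the ideal generated by the maximal minors of the block-diagonal part, i.e.\ in $I_Y^2$. Your identification $\ii_D(e)=f\ii_Y+g\ii_Y$ via the map $(u,v)\mapsto gu-fv$ together with the rigidity $\Hom(\ii_D,\oo_{\pp^r})=K$ is sharper (it pins down the ideal exactly rather than only a containment) and is consistent with the determinantal description. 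For the failure of local complete intersection, the paper's argument is considerably shorter and works at every point $x\in Y$ simultaneously: from $0\to\oo_{\pp^r,x}\to\mathcal N_x\to\ii_{D,x}\to 0$ and the fact that $\mathcal N_x=\ii_{Y,x}\oplus\ii_{Y,x}$ needs at least four generators, $\ii_{D,x}$ needs at least three, which already excludes a codimension-two complete intersection at $x$; no passage to generic points or appeal to openness of the l.c.i.\ locus is required. Your socle computation proves the stronger fact that $\oo_{D,\eta}$ is not even Gorenstein at the generic points of $Y$, at the price of the extra spreading-out step. Two assertions you make without justification do hold but deserve a line: that $f,g$ form a regular sequence in $\oo_{\pp^r,\eta}$ (this follows since $\ii_D(e)\subseteq(f,g)\oo_{\pp^r}(e)$ forces $V(f,g)\subseteq D$, hence $(f,g)$ has height $2$ at $\eta$), and that $\Hom(\ii_D,\oo_{\pp^r})=K$ (the reflexive hull of the ideal sheaf of a subscheme of codimension at least $2$ is $\oo_{\pp^r}$, as used in the proof of Theorem \ref{NtoD}).
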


\begin{proof} For the first statement, it is enough to prove that $ I_D \subset I_Y^2.$

Let $ 0 \to \mathcal L_1 \stackrel{\varphi}{\longrightarrow}
\mathcal L_0 \to \mathcal I_Y \to 0 $ be the minimal dissoci\'{e}
resolution of $ \mathcal I_Y. $ Let $ \varphi $ be represented by
a matrix $ A. $ Hence, the maximal minors of $ A $ generate the
ideal $ I_Y.$

Let $ \mathcal P = \oo_{\pp^r}(-m) $ and let $ \gamma: \mathcal P
\to \mathcal N $ be a general map whose image is a maximal
subsheaf of $ \mathcal N. $ Let $ \gamma': \mathcal P \to \mathcal
L_0 \oplus \mathcal L_0 $ be a lifting of $ \gamma.$

The ideal $ I_D $ is generated by the maximal minors of the matrix
$$ M = \left( \begin{array}{ccc} A & O & C' \\ O & A & C''
\end{array} \right) $$ where the last column represents
$ \gamma'.$ Every maximal minor of $ M $ can be computed by
Laplace rule with respect to the last column, and so it is a
combination of the maximal minors of the block matrix $ \left(
\begin{array}{cc} A & O \\ O & A \end{array} \right),$ whose
maximal minors generate the ideal $ I_Y^2.$

Let now  $x \in Y$ and set $S: =\oo_{{\pp^r},x}$. We have an exact
sequence of $S$-modules
$$0 \to S \to \mathcal N_x \to \ii_{D,x} \to 0.$$
It is easy to see that $\mathcal N_x$ needs at least four
generators whence $\ii_{D,x}$ needs at least three generators.
Since $D$ has codimension $ 2 $ it cannot be a complete
intersection at $x$.
\end{proof}

\begin{remark} \rm The easiest case we can consider is when the scheme $ Y $ is the
complete intersection of two hypersurfaces. In this case, the
scheme defined by $ I_Y^2 $ is ACM of codimension $ 2 $ and it can
be obtained from the previous construction.
\end{remark}

A similar result holds both for the direct sum of $ s (\geq 2 ) $
copies of $ \ii_Y,$ and for non--trivial extensions of $ \ii_Y $
with itself or with twists of another ACM codimension $ 2 $ scheme
$ Z $, but we do not state them.

Now, we relate extensions associated to divisors that differ by
hypersurface sections.
\begin{proposition} Let $ D \subset \pp^r $ be a codimension $ 2 $
ACM scheme. Let $ \xi \in H^0(D, \omega_D(c)) $ and $ \xi' \in
H^0(D, \omega_D(c+d)) $ both non--zero, with $ d \geq 0,$ and let
$$ 0 \to \oo_{\pp^r}(c-r-1) \to \mathcal{N} \to \ii_D \to 0 $$ and
$$ 0 \to \oo_{\pp^r}(c+d-r-1) \to \mathcal{N}' \to \ii_D \to 0 $$
be the associated short exact sequences. Then, there exists a
degree $ d $ hypersurface $ S = V(f) $ that cuts $ D $ along a
codimension $ 3 $ subscheme such that $ \xi' = f \xi $ if, and
only if, there exists a short exact sequence $$ 0 \to \mathcal{N}
\to \mathcal{N}' \to \oo_S(c+d-r-1) \to 0 $$ that induces the
identity on $ \ii_D.$
\end{proposition}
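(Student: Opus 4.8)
The plan is to read both sides of the equivalence through the identification, established in the proof of Proposition \ref{cod2}, between extension classes and sections of the canonical sheaf. There the chain of Serre--duality isomorphisms yields $\ext^1(\ii_D,\oo_{\pp^r}(c-r-1))\cong H^0(D,\omega_D(c))$, and under it the class of the first sequence is $\xi$, while for the twist $c+d$ the class of the second sequence is $\xi'$. A degree $d$ form $f$ is exactly a morphism $\oo_{\pp^r}(c-r-1)\xrightarrow{\,f\,}\oo_{\pp^r}(c+d-r-1)$, and its cokernel is $\oo_S(c+d-r-1)$ with $S=V(f)$; moreover $S$ cuts $D$ along a codimension $3$ subscheme precisely when $f$ is a non--zero--divisor on $\oo_D$, equivalently on $\omega_D$, which is a maximal Cohen--Macaulay module whose associated points are the minimal primes of $D$ because $D$ is ACM.

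The key lemma I would isolate is the naturality of this identification with respect to $f$: the map $\ext^1(\ii_D,\oo_{\pp^r}(c-r-1))\to\ext^1(\ii_D,\oo_{\pp^r}(c+d-r-1))$ induced by $\cdot f$ corresponds, under the two isomorphisms, to multiplication $\cdot f\colon H^0(D,\omega_D(c))\to H^0(D,\omega_D(c+d))$. This holds because each step of the chain --- Serre duality on $\pp^r$, the connecting isomorphism $H^{r-1}(\ii_D(-c))\cong H^{r-2}(\oo_D(-c))$ coming from $D\hookrightarrow\pp^r$, and Serre duality on $D$ --- is functorial, and on the $\Hom$ side $\cdot f$ becomes precomposition with $\oo_D(-c-d)\xrightarrow{\,f\,}\oo_D(-c)$, which is exactly $\xi\mapsto f\xi$. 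Verifying this compatibility is the main obstacle; once it is secured, the statement reduces to a formal comparison of extensions and powers both implications.

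For the forward implication I would form the push--out of the first sequence along $f$. Its bottom row is an extension of $\ii_D$ by $\oo_{\pp^r}(c+d-r-1)$ whose class is $f$ times the class of the first sequence, hence corresponds to $f\xi=\xi'$ by the lemma; therefore the push--out is isomorphic, as an extension of $\ii_D$, to $\mathcal N'$. Since $f$ is injective with cokernel $\oo_S(c+d-r-1)$, the push--out also sits in a short exact sequence $0\to\mathcal N\to\mathcal N'\to\oo_S(c+d-r-1)\to 0$, and by construction the composite $\mathcal N\to\mathcal N'\to\ii_D$ is the original surjection, so the sequence induces the identity on $\ii_D$.

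For the converse, from a sequence $0\to\mathcal N\xrightarrow{\,\alpha\,}\mathcal N'\to\oo_S(c+d-r-1)\to 0$ inducing the identity on $\ii_D$ I would build the ladder comparing the two defining extensions; the map induced by $\alpha$ on the sub--line--bundles is a morphism $\oo_{\pp^r}(c-r-1)\to\oo_{\pp^r}(c+d-r-1)$, i.e. multiplication by a degree $d$ form $f$. The snake lemma identifies $\coker(\cdot f)$ with $\coker(\alpha)=\oo_S(c+d-r-1)$, so $S=V(f)$ and $f\neq 0$, while the ladder shows that the class of $\mathcal N'$ equals $f$ times the class of $\mathcal N$, whence $\xi'=f\xi$ by the lemma. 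What remains is the codimension $3$ condition, i.e. that $f$ be a non--zero--divisor on $\omega_D$; on the left this is a hypothesis, and in the converse I expect it to follow by matching the sheaf $\mathcal N'$ with the torsion--free sheaf attached to the non--zero section $\xi'$ in Proposition \ref{cod2}, using that $\omega_D$ is Cohen--Macaulay so that its associated points are exactly the components of $D$. This matching of the transversality condition is the delicate point of the converse, the rest being the formal push--out and snake--lemma bookkeeping above.
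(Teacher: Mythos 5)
Your proposal is correct and takes essentially the same route as the paper: both reduce each implication to the compatibility of multiplication by $f$ with the identification $\ext^1(\ii_D,\oo_{\pp^r}(c-r-1))\cong H^0(D,\omega_D(c))$, proving the forward direction by pushing out the first extension along $\cdot f$ and the converse by extracting $\cdot f$ from the ladder of the two extensions. The only (cosmetic) difference is that the paper checks this compatibility via the surjection $\Hom(\mathcal{H}_2,\oo_{\pp^r}(c+d-r-1))\to\ext^1(\ii_D,\oo_{\pp^r}(c+d-r-1))$ coming from the dissoci\'{e} resolution of $\ii_D$ (sending $\psi\mapsto f\psi$) rather than via functoriality of the Serre-duality chain, and you are, if anything, more explicit than the paper about the codimension-$3$ / non--zero--divisor point in the converse.
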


\begin{proof} In the proof of previous Proposition, we constructed
the sheaf $ \mathcal{N} $ as push--out $$\commdiag{\mathcal{H}_2 &
\mapright\lft{\varphi} & \hspace{.5 cm} \mathcal{H}_1  \cr
\mapdown\lft{\psi} & & \hspace{.5 cm} \mapdown \cr
\oo_{\pp^r}(c-r-1) & \mapright & \hspace{.5 cm} \mathcal{N} }.$$

Assume that $ \xi' = f \xi.$ The section $ f \xi \in H^0(D,
\omega_D(c+d)) $ is the image of the map $ f \psi \in
\Hom(\mathcal{H}_2, \oo_{\pp^r}(c+d-r-1)) $ in $ \ext^1(\ii_D,
\oo_{\pp^r}(c+d-r-1)) $ and so the sheaf $ \mathcal{N}' $ is the
push--out of $ \mathcal{H}_1 $ and $ \oo_{\pp^r}(c+d-r-1) $ via $
\varphi $ and $ -f \psi.$ From the universal property of the
push--out (see \cite{lang}, pp. 62), we get the following map of
complexes $$ \commdiag{0 \to & \oo_{\pp^r}(c-r-1) & \mapright &
\mathcal{N} & \mapright & \ii_D & \to 0 \cr & \mapdown\lft{f} & &
\mapdown\lft{\varepsilon} & & \bivline & \cr 0 \to &
\oo_{\pp^r}(c+d-r-1) & \mapright & \mathcal{N}' & \mapright &
\ii_D & \to 0 } $$ and so $ \varepsilon $ is injective, and $
\mbox{coker}(\varepsilon) \cong \oo_S(c+d-r-1),$ as claimed.

Assume now that the short exact sequence $$ 0 \to \mathcal{N} \to
\mathcal{N}' \to \oo_S(c+d-r-1) \to 0 $$ induces the identity on $
\ii_D.$ Standard arguments allow us to lift $ \varepsilon $ to an
injective map $ \oo_{\pp^r}(c-r-1) \to \oo_{\pp^r}(c+d-r-1) $
whose cokernel is isomorphic to $ \oo_S(c+d-r-1).$ Hence, the map
is the multiplication by $ f,$ and $ \mathcal{N}' $ is the
push--out of $ \mathcal{H}_1 $ and $ \oo_{\pp^r}(c+d-r-1) $ via $
\varphi $ and $ f \psi.$ Hence, $ \xi' = f \xi,$ and the proof is
complete.
\end{proof}

\begin{remark} \rm Let $ \xi, \xi' \in H^0(D, \omega_D(c)).$ By applying the
previous Proposition, we get that $ \xi $ and $ \xi' $ are
linearly dependent if and only if the sheaves $ \mathcal N $ and $
\mathcal N' $ associated to them are isomorphic.
\end{remark}

%%%%%%%%%%%%%%%%%%%%%%%%%%%%%%%%%%%%%%%%%%%%%%%%%%%%%%%%%%%%%%%%%%%%%%%%%%%

\section{ACM schemes from ACM ones}

Let $ X \subset \pp^r $ be a codimension $ t $ ACM scheme. For
general choices, $ s (< t) $ hypersurfaces of large degree
containing $ X $ define a complete intersection codimension $ s $
ACM scheme containing $ X.$ In this section, we discuss the
related problem of finding an ACM codimension $ s $ closed scheme
$ D \subset \pp^r $ containing $ X.$ Of course, we make use of the
construction described in the previous section.

The main result is the following.
\begin{proposition} \label{XtoD} Let $ X $ be a codimension $ t $
ACM scheme in $ \pp^r $ with $ 3 \leq t \leq r $ and let
\begin{equation} \label{res_IX} 0 \to F_t
\stackrel{\delta_t}{\longrightarrow} F_{t-1} \to \dots \to F_2
\stackrel{\delta_2}{\longrightarrow} F_1
\stackrel{\delta_1}{\longrightarrow} I_X \to 0
\end{equation} be the minimal free resolution of the saturated
ideal that defines $ X.$ Let $ N = \ker(\delta_{t-s}) $ be the $
(t-s)$--th syzygy module of $ X,$ for some $ s \geq 2,$ and let $
P $ be a torsion--free $ R$--module of projective dimension $
s-2.$ Assume further that $ \mathcal N $ and $ \mathcal P $
satisfy the condition $ (H.3),$  and that there exists an
injective map $ \gamma: \mathcal{P} \to \mathcal{N} $ such that $
\gamma(\mathcal{P}) $ is a maximal subsheaf of $ \mathcal{N}.$
Then, for every ACM codimension $ s $ closed scheme $ D $
constructed as in Theorem \ref{NtoD} there is a short exact
sequence
$$ 0 \to
\mathcal{E}xt^{s-2}(\mathcal{P}, \omega_{\pp^r}) \to \omega_D(-k)
\to \omega_X \to 0. $$ Moreover, $ D $ contains $ X. $
\end{proposition}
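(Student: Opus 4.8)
The plan is to dualize the defining sequence of $D$ into the canonical sheaves and to read off both assertions from a single long exact sequence of $\mathcal{E}xt$'s; throughout write $\omega := \omega_{\pp^r}$ and agree that $\mathcal{E}xt^i(-)$ means $\mathcal{E}xt^i(-,\omega)$. First I set up the construction. Since $N=\ker(\delta_{t-s})$, the tail $0\to \mathcal F_t\to\cdots\to\mathcal F_{t-s+1}\to\mathcal N\to 0$ of (\ref{res_IX}) is the minimal dissoci\'e resolution of $\mathcal N$, so $\pd(\mathcal N)=s-1=\pd(\mathcal P)+1$; together with $\pd(\mathcal P)=s-2$ and $s\le r$ this gives $(H.1)$ and $(H.2)$, while $(H.3)$ and the existence of $\gamma$ are assumed. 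Hence Theorem \ref{NtoD} applies and produces the ACM scheme $D$ of codimension $s$ with the short exact sequence $0\to\mathcal P\stackrel{\gamma}{\to}\mathcal N\to\ii_D(k)\to 0$, where $k=\deg(\mathcal N)-\deg(\mathcal P)$.

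Next I compute $\mathcal{E}xt^\bullet$ of the three sheaves, using that for an ACM scheme $Y$ of codimension $c$ one has $\mathcal{E}xt^i(\oo_Y,\omega)=\omega_Y$ for $i=c$ and $0$ otherwise (duality for Cohen--Macaulay sheaves, cf.\ \cite{hart}, Ch.~III). From $0\to\ii_D(k)\to\oo_{\pp^r}(k)\to\oo_D(k)\to 0$ one gets $\mathcal{E}xt^i(\ii_D(k))\cong\mathcal{E}xt^{i+1}(\oo_D(k))$ for $i\ge 1$, hence $\mathcal{E}xt^i(\ii_D(k))=0$ for $1\le i\le s-2$ and $\mathcal{E}xt^{s-1}(\ii_D(k))\cong\omega_D(-k)$. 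For $\mathcal N$ I use that it is the $(t-s)$-th syzygy sheaf of $\ii_X$: splitting the sheafified (\ref{res_IX}) into short exact sequences $0\to\Omega_i\to\mathcal F_i\to\Omega_{i-1}\to 0$ (with $\Omega_0=\ii_X$ and $\Omega_{t-s}=\mathcal N$) and invoking $\mathcal{E}xt^{\ge 1}(\mathcal F_i)=0$, dimension shifting yields $\mathcal{E}xt^j(\mathcal N)\cong\mathcal{E}xt^{\,j+t-s}(\ii_X)$ for $j\ge 1$. Since $X$ is ACM of codimension $t$, the same computation as for $D$ gives $\mathcal{E}xt^j(\mathcal N)=0$ for $1\le j\le s-2$ and $\mathcal{E}xt^{s-1}(\mathcal N)\cong\omega_X$. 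Finally $\pd(\mathcal P)=s-2$ forces $\mathcal{E}xt^j(\mathcal P)=0$ for $j\ge s-1$.

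Now apply the contravariant functor $\mathcal{E}xt^\bullet(-)$ to $0\to\mathcal P\to\mathcal N\to\ii_D(k)\to 0$ and read the terms in degrees $s-2$ and $s-1$:
\[ \mathcal{E}xt^{s-2}(\mathcal N)\to\mathcal{E}xt^{s-2}(\mathcal P)\to\mathcal{E}xt^{s-1}(\ii_D(k))\to\mathcal{E}xt^{s-1}(\mathcal N)\to\mathcal{E}xt^{s-1}(\mathcal P). \]
Substituting the values just computed, and using $\mathcal{E}xt^{s-2}(\mathcal N)=0$ (valid for $s\ge 3$) together with $\mathcal{E}xt^{s-1}(\mathcal P)=0$, gives exactly
\[ 0\to\mathcal{E}xt^{s-2}(\mathcal P,\omega_{\pp^r})\to\omega_D(-k)\to\omega_X\to 0, \]
which is the asserted sequence. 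Note that the right-hand surjection $\omega_D(-k)\twoheadrightarrow\omega_X$ already holds for every $s\ge 2$, since only $\mathcal{E}xt^{s-1}(\mathcal P)=0$ is needed for it.

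It remains to deduce $X\subseteq D$, i.e.\ $I_D\subseteq I_X$, from this surjection. Let $f\in I_D$. Then $f$ annihilates $\oo_D$, hence $\omega_D(-k)$, and therefore its quotient $\omega_X$. As $R/I_X$ is Cohen--Macaulay, its canonical module $\omega_X$ is a faithful $\oo_X$-module, so $\operatorname{ann}_{\oo_{\pp^r}}(\omega_X)=I_X$, whence $f\in I_X$; thus $I_D\subseteq I_X$ and $D\supseteq X$. I expect the main obstacle to be the index bookkeeping in the dimension shift identifying $\mathcal{E}xt^{s-1}(\mathcal N)\cong\omega_X$ (keeping the syzygy sequences aligned and checking that the intermediate $\mathcal{E}xt$'s vanish), together with citing the correct Cohen--Macaulay duality and the faithfulness of the canonical module; the endpoint $s=2$ must be treated separately, since there $\mathcal{E}xt^{s-2}(\mathcal N)=\mathcal{H}om(\mathcal N,\omega)$ need not vanish and only the quotient map $\omega_D(-k)\twoheadrightarrow\omega_X$ persists, which is still enough for the containment $D\supseteq X$.
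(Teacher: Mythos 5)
Your proof is correct and follows essentially the same route as the paper: verify the hypotheses (\ref{H-hyp}), apply Theorem \ref{NtoD}, dualize the sequence (\ref{pnd}) into $\omega_{\pp^r}$, identify $\mathcal{E}xt^{s-1}(\ii_D(k),\omega_{\pp^r})\cong\omega_D(-k)$ and $\mathcal{E}xt^{s-1}(\mathcal N,\omega_{\pp^r})\cong\omega_X$ by dimension shifting, and deduce the containment from the annihilators of the canonical modules. Your closing observation that the identification $\mathcal{E}xt^{s-2}(\mathcal N,\omega_{\pp^r})=0$ requires $s\geq 3$ is a genuine point that the paper's proof glosses over, though only the surjection $\omega_D(-k)\twoheadrightarrow\omega_X$ (hence the containment) survives at $s=2$, exactly as you note.
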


\begin{proof} The $ R$--module $ N $ is torsion--free, and has
no free summand, because it is computed from the minimal free
resolution of $ I_X.$ Moreover,
$$ 0 \to F_t \stackrel{\delta_t}{\longrightarrow} F_{t-1}
\stackrel{\delta_{t-1}}{\longrightarrow} \dots
\stackrel{\delta_{t-s+2}}{\longrightarrow} F_{t-s+1} \to N \to 0
$$ is the minimal free resolution of $ N $ and so the projective
dimension of $ N $ is $ s-1.$ Hence, $ \mathcal N $ and $ \mathcal
P $ satisfy all the conditions (\ref{H-hyp}).

Hence, by Theorem \ref{NtoD} there exists a codimension $ s $ ACM
closed scheme $ D \subset \pp^r,$ and an integer $ k $ such that
$$ 0 \to \mathcal{P} \to \mathcal{N} \to \ii_D(k) \to 0
$$ is a short exact sequence. By applying $ \mathcal{H}om(-,
\omega_{\pp^r}) $ we get
\begin{equation*} \begin{split} \mathcal{E}xt^{s-2}(\mathcal{N},
\omega_{\pp^r}) & \to \mathcal{E}xt^{s-2}(\mathcal{P},
\omega_{\pp^r}) \to \mathcal{E}xt^{s-1}(\ii_D(k), \omega_{\pp^r})
\to \\ & \to \mathcal{E}xt^{s-1}(\mathcal{N}, \omega_{\pp^r}) \to
\mathcal{E}xt^{s-1}(\mathcal{P}, \omega_{\pp^r}). \end{split}
\end{equation*}

$ \mathcal{E}xt^{s-1}(\mathcal{P}, \omega_{\pp^r}) = 0 $ because $
\pd(\mathcal P) = s-2,$ while $ \mathcal{E}xt^{s-j}(\mathcal{N},
\omega_{\pp^r}) = \mathcal{E}xt^{t-j}(\ii_X, \omega_{\pp^r}) $ by
definition of $ N.$ Hence, $ \mathcal{E}xt^{s-1}(\mathcal{N},
\omega_{\pp^r}) = \omega_X,$ and $
\mathcal{E}xt^{s-2}(\mathcal{N}, \omega_{\pp^r}) = 0 $ because $ X
$ is ACM of codimension $ t $ (\cite{hart}, Ch. III, Proposition
7.5 and Theorem 7.1). Again by (\cite{hart}, Ch. III, Proposition
7.5), $ \mathcal{E}xt^{s-1}(\ii_D(k), \omega_{\pp^r}) =
\omega_D(-k).$ Summarizing the above arguments, the construction
induces a short exact sequence $$ 0 \to
\mathcal{E}xt^{s-2}(\mathcal{P}, \omega_{\pp^r}) \to \omega_D(-k)
\to \omega_X \to 0 $$ that relates the dualizing sheaves of $ X $
and $ D.$ In particular, we can think of $ \omega_X $ as a
quotient of $ \omega_D,$ up to a twist. The annihilator of $
\omega_X $ is $ \ii_X $ (see, \cite{ei}, Corollary 21.3), the one
of $ \omega_D $ is $ \ii_D,$ and so we get the last claim because
it is evident that the annihilator of $ \omega_D(-k) $ is
contained in the one of $ \omega_X.$
\end{proof}

The previous Proposition explains our motivation in studying the
exact sequences as (\ref{pnd}). In fact, we applied the
construction by M.Martin--Deschamps and D.Perrin to the first
syzygy module $ N $ of a zero--dimensional scheme $ X $ in $
\pp^3,$ i.e. $ \pd(N) = 1.$ The above mentioned construction
provides a free module $ P $ ($ \pd(P) = 0 $) and a general
injective map $ \gamma: P \to N $ whose cokernel is, up to a
twist, the ideal of a curve $ D $ (and so the codimension of $ D $
is $ 2 $). Hence, the hypotheses of Proposition \ref{XtoD} are
fulfilled and the curve $ D $ is ACM and contains $ X.$

We rephrase Proposition \ref{constrains} in the case $ \mathcal N
$ is the $ (t-s)$--syzygy sheaf of an ACM scheme $ X $ of
codimension $ t.$
\begin{corollary} \label{constrains1} Let $ X $ and $ D $ be
schemes as in Proposition \ref{XtoD}. Then, the Cohen--Macaulay
type of $ X $ is not greater than the one of $ D.$ In particular,
$ D $ is arithmetically Gorenstein if and only if $ X $ is such.
\end{corollary}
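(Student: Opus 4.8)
The plan is to read the Cohen--Macaulay type off the tail of the minimal free resolution. For an ACM scheme $Z\subseteq\pp^r$ of codimension $c$ the ring $R/I_Z$ has projective dimension $c$, and the rank of the last free module in its minimal free resolution is exactly the type of $Z$ (equivalently $\mu(\omega_Z)$, obtained by dualizing). Thus the type of $X$ is $\rk(F_t)$, the rank of the last term of (\ref{res_IX}), while the type of $D$ is $\rk(H_s)$, the rank of the last term of the minimal free resolution $0\to H_s\to\dots\to H_1\to I_D\to 0$. Since $\rk(H_s)=\dim_k\operatorname{Tor}_{s-1}(I_D(k),k)$ is a minimal Betti number, it can be computed from \emph{any} free resolution of $\ii_D(k)$, and the natural device is the mapping--cone resolution of Remark \ref{rem3.3}.

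First I would compute $\rk(H_s)$ from that mapping cone. Because $\mathcal N$ is the $(t-s)$--syzygy sheaf of $X$, in the notation of (\ref{H-hyp}) one has $F_s=F_t$, so the top term of the resolution in (\ref{non-min-res-D}) is $G_{s-1}\oplus F_t$, with top differential
\[
\varepsilon_s=\begin{pmatrix}\Delta_{s-1}&0\\(-1)^s\gamma_{s-1}&\delta_s\end{pmatrix}\colon G_{s-1}\oplus F_t\longrightarrow G_{s-2}\oplus F_{s-1}.
\]
Both $\Delta_{s-1}$ and $\delta_s$ belong to minimal free resolutions (of $P$ and of $N$ respectively), hence have entries in the irrelevant ideal and vanish after $\otimes_R k$. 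Therefore $\ker(\varepsilon_s\otimes_R k)=\ker(\gamma_{s-1}\otimes_R k)\oplus(F_t\otimes_R k)$, and since nothing maps into the top slot,
\[
\rk(H_s)=\dim_k\ker(\varepsilon_s\otimes_R k)=\rk(F_t)+\dim_k\ker(\gamma_{s-1}\otimes_R k).
\]
In particular $\rk(F_t)\le\rk(H_s)$, i.e. the type of $X$ is not greater than the type of $D$; equivalently, $F_t$ is a direct summand of $H_s$.

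For the Gorenstein equivalence, one direction is immediate: $D$ is arithmetically Gorenstein exactly when $\rk(H_s)=1$, and as $F_t$ is a nonzero summand of $H_s$ this forces $\rk(F_t)=1$, so $X$ is arithmetically Gorenstein. The converse is where I expect the real work. Assuming $\rk(F_t)=1$, the displayed formula shows that $D$ is Gorenstein if and only if $\dim_k\ker(\gamma_{s-1}\otimes_R k)=0$, i.e. if and only if the induced map $\gamma_{s-1}\colon G_{s-1}\to F_{s-1}$ is a split injection, so that the whole $G_{s-1}$--part cancels in the mapping cone. Equivalently, in the canonical--module sequence
\[
0\to\mathcal{E}xt^{s-2}(\mathcal P,\omega_{\pp^r})\to\omega_D(-k)\to\omega_X\to 0
\]
of Proposition \ref{XtoD}, the submodule $\mathcal{E}xt^{s-2}(\mathcal P,\omega_{\pp^r})$ must contribute no minimal generator of $\omega_D$, so that $\omega_D$ inherits cyclicity from the now cyclic $\omega_X$. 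The main obstacle is precisely to force this cancellation: it does not follow from the type inequality alone, and I would attempt to extract it from the hypothesis that $\gamma(\mathcal P)$ is a \emph{maximal} subsheaf of $\mathcal N$ (so that $\mathcal N/\gamma(\mathcal P)$ is torsion--free) together with the self--duality of the minimal free resolution of the Gorenstein scheme $X$, using these to control the twists of $G_{s-1}$ against those of $F_{s-1}$ and thereby guarantee that $\gamma_{s-1}$ splits.
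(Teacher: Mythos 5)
Your proof of the type inequality, and of the implication ``$D$ arithmetically Gorenstein $\Rightarrow$ $X$ arithmetically Gorenstein'', is correct and is essentially the paper's own argument: the paper says that the summand $F_t$ of the top term $G_{s-1}\oplus F_t$ of the mapping--cone resolution (\ref{non-min-res-D}) cannot be cancelled because $\delta_t$ belongs to the minimal resolution (\ref{res_IX}), while you reach the same conclusion by computing $\rk(H_s)=\dim_k\operatorname{Tor}_{s-1}(I_D(k),k)=\rk(F_t)+\dim_k\ker(\gamma_{s-1}\otimes_R k)$. The paper's proof contains nothing beyond this, except the observation (which you also make) that $H_s=F_t$ exactly when $\gamma_{s-1}$ is split injective; it then declares the second statement ``straightforward''.

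The direction you flag as ``the real work'', namely ``$X$ arithmetically Gorenstein $\Rightarrow$ $D$ arithmetically Gorenstein'', is indeed not proved by your argument --- but you should not try to close this gap, because the implication is false as stated, and the strategy you sketch (forcing $\gamma_{s-1}$ to split from maximality of $\gamma(\mathcal P)$ and self--duality of the resolution of $I_X$) cannot succeed. The paper itself provides a counterexample in the Example following Remark \ref{4-points}: there $X$ is the complete intersection of a linear form and two quadrics (four general coplanar points in $\pp^3$), hence arithmetically Gorenstein of type $1$, yet the choice $P=R(-5)$ yields an ACM curve $E$ with minimal free resolution $0\to R^2(-5)\to R^2(-3)\oplus R(-4)\to I_E\to 0$, which has Cohen--Macaulay type $2$ and, having three minimal generators in codimension $2$, is not a complete intersection and so not arithmetically Gorenstein. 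Your identity $\rk(H_s)=\rk(F_t)+\dim_k\ker(\gamma_{s-1}\otimes_R k)$ is exactly the right diagnosis: $D$ is arithmetically Gorenstein if and only if $X$ is \emph{and} $\gamma_{s-1}$ is split injective, and the latter is an additional condition on $\gamma$ and on the twists of $G_{s-1}$ relative to $F_{t-1}$, not a consequence of the hypotheses of Proposition \ref{XtoD}. The honest conclusion is that only the implication ``$D$ arithmetically Gorenstein $\Rightarrow$ $X$ arithmetically Gorenstein'' holds in general; the paper's ``if and only if'' (dismissed there as straightforward) is an overstatement contradicted by its own example.
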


\begin{proof} The minimal dissoci\'{e} resolution of $ \mathcal N
$ agrees with the one of $ \ii_X,$ and so $ F_t \oplus G_{s-1} $
appears in a free resolution of $ I_D(k),$ as it follows from
Remark \ref{rem3.3}. $ F_t $ cannot be cancelled because it maps
to $ F_{t-1} $ and the resolution of $ I_X $ is minimal, and so
the first claim follows. In particular, $ F_t $ is equal to the
last free module in a minimal free resolution of $ I_D(k) $ if and
only if $ \gamma_{s-1} : G_{s-1} \to F_{t-1} $ is
split--injective, where $ \gamma_{s-1} $ is induced from $ \gamma:
P \to N.$ The second statement is straightforward.
\end{proof}

For example, if $ X \subset \pp^3 $ is a set of $ 5 $ general
points, it is arithmetically Gorenstein with Pfaffian resolution
$$ 0 \to R(-5) \to R^5(-3) \to R^5(-2) \to I_X \to 0.$$ By
applying the previous construction with $ P = R^3(-3),$ we get
that $ k = -1 $ and the minimal free resolution of $ I_D $ is
$$ 0 \to R(-5) \to R^2(-3) \to I_D(-1) \to 0,$$ so $ D $ is a
complete intersection curve in $ \pp^3.$

\begin{remark} \label{4-points} Among the ACM closed schemes
$ D $ constructed in Proposition \ref{XtoD} we might not find the
ones of minimal degree containing $ X.$ For example, let $ X
\subset \pp^3 $ be the degree $ 4 $ reduced scheme consisting of
the vertices of the unit tetrahedron. With an easy computation, we
get that $ I_X $ is generated by $ xy, xz, xw, yz, yw, zw,$ and
its minimal free resolution is $$ 0 \to R^3(-4) \to R^8(-3) \to
R^6(-2) \to I_X \to 0.$$ An  ACM curve $ C $  of minimal degree
containing $ X $ is the union of the three lines $ V(x,y), V(y,z),
V(z,w).$ The minimal free resolution $I_C$ is $$ 0 \to R^2(-3) \to
R^3(-2) \to I_C \to 0.$$ It follows that $ C $ cannot be obtained
from Proposition \ref{XtoD} because the Cohen--Macaulay types of $
X $ and $ C $ are $ 3 $ and $ 2,$ respectively, and this is not
possible by Corollary \ref{constrains1}.
\end{remark}

\begin{example} \rm In this example, we construct two ACM curves
with different Cohen--Macaulay types starting from the same $ X.$

Let $ r = 3 $ and let $ X $ be a set of four general points in a
plane. Of course, $ I_X $ is the complete intersection of a linear
form and two quadratic forms, and so its minimal free resolution
is $$ 0 \to R(-5) \to R^2(-3) \oplus R(-4) \to R(-1) \oplus
R^2(-2) \to I_X \to 0.$$

If we choose $ P = R(-3),$ we get a complete intersection curve $
D $ whose minimal free resolution is $$ 0 \to R(-5) \to R(-3)
\oplus R(-4) \to I_D(-2) \to 0.$$

On the other hand, if we choose $ P = R(-5),$ we get an ACM curve
$ E $ whose minimal free resolution is $$ 0 \to R^2(-5) \to
R^2(-3) \oplus R(-4) \to I_E \to 0.$$

Both curves are constructed by choosing a general injective map
from $ P $ to $ R^2(-3) \oplus R(-4).$
\end{example}

Summarizing the obtained results, we proved that it is possible to
construct a codimension $ s $ ACM closed scheme $ D $ containing a
given codimension $ t $ ACM scheme $ X $ as soon as $ s < t.$ Some
of the restrictions are: the number of minimal generators of $ I_D
$ cannot be larger than the number of minimal generators of the $
R$--module $ N $ we used in the construction, and the last free
module in a minimal free resolution of $ I_X $ is a direct summand
of the last free module in a minimal free resolution of $ I_D(k).$
A consequence of the restrictions is that there are ACM schemes
containing $ X $ that cannot be constructed as explained in
Proposition \ref{XtoD} (e.g., see Remark \ref{4-points}).

The last result we present in this section allows us to
reconstruct an ACM scheme $ D $ from a subscheme $ X $ of $ D $
obtained by intersecting $ D $ with a complete intersection $ S.$

\begin{proposition} Let $ D \subset \pp^r $ be a codimension $ s $ ACM scheme with
minimal free resolution $$ 0 \to H_s
\stackrel{\varepsilon_s}{\longrightarrow} H_{s-1}
\stackrel{\varepsilon_{s-1}}{\longrightarrow} \dots
\stackrel{\varepsilon_2}{\longrightarrow} H_1 \to I_D \to 0,$$ and
let $ S = V(f_1, \dots, f_t) $ be a codimension $ t $ complete
intersection scheme that cuts $ D $ along a codimension $ s+t \leq
r $ scheme $ X.$ Then, $ D $ can be constructed from $ X $ as
explained in Proposition \ref{XtoD}.
\end{proposition}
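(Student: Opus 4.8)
The plan is to relate $X = D \cap S$ to $D$ by showing that a syzygy sheaf of $X$ arises from the minimal free resolution of $I_D$ via a Koszul-type tensoring with the complete intersection $S$, so that Proposition \ref{XtoD} applies. The key observation is that, since $S = V(f_1,\dots,f_t)$ is a complete intersection cutting $D$ along the ACM scheme $X$ of codimension $s+t \le r$, the forms $f_1,\dots,f_t$ form a regular sequence on $R_D = R/I_D$. Hence a minimal free resolution of $I_X$ can be obtained by taking the mapping cone (or tensor product over $R$) of the given resolution of $I_D$ with the Koszul complex on $f_1,\dots,f_t$, and this tensored complex remains a resolution precisely because $f_1,\dots,f_t$ is a regular sequence on $R_D$ (equivalently because $X$ is ACM of the expected codimension).

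First I would make explicit the resolution of $I_X$. Set $K_\bullet$ to be the Koszul complex on $f_1, \dots, f_t$, and consider the total complex of $H_\bullet \otimes_R K_\bullet$, suitably interpreted so that it resolves $I_X$ up to the appropriate shift. The crucial point is that the last (highest-degree) free module in this resolution is $H_s \otimes \wedge^t(R^t) \cong H_s(-f)$ where $f = \deg f_1 + \dots + \deg f_t$, i.e. it is a single copy of $H_s$ twisted. Reading the resolution of $I_X$ from the top, the $(t-s)$-th syzygy module $N := \ker(\delta_{t-s})$ of $X$ in the notation of Proposition \ref{XtoD} will be governed by the truncation of this tensored complex, and one checks that its minimal dissoci\'ee resolution has length $s-1$, so that $\pd(\mathcal N) = s-1$ and $\mathcal N$ is the syzygy sheaf required by the hypotheses of Proposition \ref{XtoD}.

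Next I would verify that the sheaf $\mathcal N$ so obtained, together with an appropriately chosen torsion--free $\mathcal P$ of projective dimension $s-2$, recovers $D$ via the construction. The natural candidate is to take $\mathcal P$ built from $H_2 \oplus \dots \oplus H_s$ arranged by the Koszul differentials in the directions transverse to $X$, and to exhibit the injective map $\gamma : \mathcal P \to \mathcal N$ whose cokernel is $\ii_D(k)$ directly from the structure of the mapping cone. Because the original resolution of $I_D$ is a direct summand (up to twist) of the tensored complex, the sequence (\ref{pnd}) with third term $\ii_D(k)$ is built in, and one confirms that $D$ is exactly the scheme produced in Theorem \ref{NtoD} from this $(\mathcal N, \mathcal P)$.

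\textbf{The main obstacle} will be the bookkeeping of the mapping-cone/Koszul resolution: verifying that the tensored complex is genuinely a \emph{minimal} resolution of $I_X$ (or identifying precisely which summands cancel), and then extracting from it a map $\gamma : \mathcal P \to \mathcal N$ whose image is a \emph{maximal} subsheaf with cokernel $\ii_D(k)$, rather than merely a subsheaf with the right rank. Establishing maximality amounts to checking that $\mathcal N / \gamma(\mathcal P)$ is torsion--free of rank $1$ (Proposition \ref{maximal-sub-sheaf}), which should follow because the cokernel is forced to be $\ii_D(k)$ by the very construction of the Koszul tensor; nonetheless, pinning down $\mathcal P$ and $k$ and confirming that condition $(H.3)$ on Hilbert polynomials holds will require care with the twists and ranks coming from the degrees $\deg f_i$. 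Once these identifications are in place, the conclusion that $D$ is reconstructed from $X$ as in Proposition \ref{XtoD} is immediate.
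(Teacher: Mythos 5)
Your proposal follows essentially the same route as the paper: resolve $I_X$ by tensoring the minimal free resolution of $I_D$ with the Koszul complex on $f_1,\dots,f_t$ (exactness coming from $X$ having codimension $s+t$), take $N$ to be the syzygy module of $I_X$ with $\pd(\mathcal N)=s-1$ (which here is $\ker(\delta_t)$, not $\ker(\delta_{t-s})$ --- a harmless index slip), take $P$ to be the subcomplex built from the summands $H_i\otimes\wedge^{j}F$ with $j<t$, and observe that the quotient complex is the resolution of $I_D$ twisted by $-\sum\deg(f_i)$, so $\mathcal N/\mathcal P\cong\ii_D(k)$ and maximality of $\gamma(\mathcal P)$ follows from torsion--freeness of this quotient. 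This is exactly the paper's argument, so the plan is correct and the obstacles you flag (minimality bookkeeping, maximality) are resolved just as you anticipate.
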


\begin{proof} Let $ F = \oplus_{i=1}^t R(-deg(f_i)).$ Then, the
minimal free resolution of $ I_S $ is given by the Koszul complex
$$ 0 \to \wedge^t F \stackrel{\varphi_t}{\longrightarrow}
\wedge^{t-1} F \stackrel{\varphi_{t-1}}{\longrightarrow} \dots
\stackrel{\varphi_2}{\longrightarrow} F
\stackrel{\varphi_1}{\longrightarrow} I_S \to 0 $$ where $
\varphi_i = \wedge^i \varphi $ and $ \varphi:F \to R $ is defined
as $ \varphi(e_i) = f_i $ for each $ i = 1, \dots, t,$ where $
e_1, \dots, e_t $ is the canonical basis of $ F.$

Let $ X = D \cap S,$ and let $ I_X \subset R $ be its saturated
ideal. It is easy to prove that a free resolution of $ I_X $ can
be constructed as tensor product of the resolutions of $ I_D $ and
$ I_S $ (for the definition of the tensor product of complexes see
Section 17.3 in \cite{ei}). Hence, it is equal to $$ 0 \to G_{s+t}
\to G_{s+t-1} \to \dots \to G_1 \to I_X \to 0 $$ where $$ G_h =
\bigoplus_{i+j=h,\ i, j \geq 0} H_i \otimes \wedge^j F
$$ for $ h = 1, \dots, s+t,$ and the map $ \delta_h : G_h \to
G_{h-1} $ restricted to $ H_i \otimes \wedge^j F \to (H_{i-1}
\otimes \wedge^j F) \oplus (H_i \otimes \wedge^{j-1} F) $ is
defined as $$ \delta_i = \left(
\begin{array}{c} \varepsilon_i \otimes 1 \\ (-1)^i\ 1 \otimes
\varphi_j \end{array} \right).$$ In particular, $ X $ is ACM of
codimension $ s+t.$

Let $ N $ be the kernel of $ \delta_t,$ and so a resolution of $ N
$ is equal to $$ 0 \to G_{s+t} \to \dots \to G_{t+1} \to N \to
0.$$ Moreover, $ N $ is torsion--free.

Now, let $ G'_{t+j} = (H_{j+1} \otimes \wedge^{t-1} F) \oplus
\dots \oplus (H_s \otimes \wedge^{t+j-s} F) $ for $ j = 1, \dots,
s-1.$ Of course, $ G_{t+j} = (H_j \otimes \wedge^t F) \oplus
G'_{t+j}.$ Let $ \Delta_{t+j} : G'_{t+j} \to G'_{t+j-1} $ be the
restriction of $ \delta_{t+j} $ to $ G'_{t+j},$ and let $ P =
\mbox{coker}(\Delta_{t+2}).$ A free resolution of $ P $ is $$ 0
\to G'_{s+t-1} \to G'_{s+t-2} \to \dots \to G'_{t+1} \to P \to
0.$$ In fact, it is easy to prove that it is a complex.
Furthermore,  it is exact, because it is a sub--complex of the
resolution of $ I_X.$ It is obvious that the inclusion $ G'_{t+j}
\to G_{t+j} $ for $ j \geq 1,$ induces an inclusion $ P \to N.$
The resolution of the cokernel is $$ 0 \to H_s \otimes \wedge^t F
\to H_{s-1} \otimes \wedge^t F \to \dots \to H_1 \otimes \wedge^t
F \to N/P \to 0.$$ But $ \wedge^t F \cong R(-\sum_{i=1}^t
\deg(f_i)) $ and the maps are $ \varepsilon_i \otimes 1.$ Hence, $
\mathcal{N}/\mathcal{P} \cong \ii_D(k) $ where $ k = -\sum_{i=1}^t
\deg(f_i).$ In particular, from Proposition 2.9 it follows that $
\mathcal{P} $ is a maximal sub--sheaf of $ \mathcal{N},$ and so
the claim is proved.
\end{proof}

\begin{remark} \rm  In the previous theorem, suppose $ D $ is a complete intersection.
Then, $ X $ is a complete intersection too and $ I_X $ is
generated by a regular sequence obtained by taking all the
generators of $ I_D $ and $ I_S. $

Reversing this observation, we consider a complete intersection
scheme $ X $ generated by a regular sequence of forms $ (f_0,
\dots, f_i), $ with $ i \leq r.$ Starting from $ X $ we can obtain
all the schemes $ D $ generated by a subset of generators of $ X.
$ In particular, if we take $ i=r, $ and $ f_j=x_j, j=0, \dots r,
$ the $(r-1)$--syzygy sheaf $ \mathcal N $ involved in the
construction is a twist of the tangent sheaf $ T_{\pp^r}.$
\end{remark}

%%%%%%%%%%%%%%%%%%%%%%%%%%%%%%%%%%%%%%%%%%%%%%%%%%%%%%%%%%%%%%%%%%%%%%%%%%%

%%%%%%%%%%%%%%%%%%%%%%%%%%%%%%%%%%%%%%%%%%%%%%%%%%%%%%%%%%%%%%%%%%%%%%%%%%%%

\end{document}